\newtheorem{theorem}{Theorem}
\newtheorem{definition}[theorem]{Definition}
\newtheorem{lemma}[theorem]{Lemma}
\newtheorem{proposition}[theorem]{Proposition}
\newtheorem{remark}[theorem]{Remark}
\begin{document}
	
	\title{A split special Lagrangian calibration \\ associated with frame vorticity}
	\author{Marcos Salvai \thanks{%
			This work was supported by Consejo Nacional de Investigaciones Cient\'{\i}ficas
			y T\'ecnicas and Secre\-tar\'{\i}a de Ciencia y T\'ecnica de la Universidad
			Nacional de C\'ordoba.}}
	\date { }
	\maketitle
	
	\begin{abstract}
	Let $M$ be an oriented three-dimensional Riemannian manifold. We define a
notion of vorticity of local sections of the bundle $SO\left(
M\right) \rightarrow M$ of all its positively oriented orthonormal tangent
frames. When $M$ is a space form, we relate the concept to a
suitable invariant split pseudo-Riemannian metric on Iso$_{o}\left( M\right)
\cong SO\left( M\right) $: A local section has positive vorticity if and only
if it determines a space-like submanifold. In the Euclidean case we find
explicit homologically volume maximizing sections using a split special
Lagrangian calibration. We introduce the concept of optimal frame vorticity and
give an optimal screwed global section for the three-sphere. We prove that
it is also homologically volume maximizing (now using a common one-point
split calibration). Besides, we show that no optimal section can exist in
the Euclidean and hyperbolic cases.
\end{abstract}

\noindent \textsl{Keywords:} Special Lagrangian calibration, frame vorticity, split bi-invariant metric

\medskip

\noindent \textsl{Mathematics Subject Classification 2020:} 49Q20, 53C30, 53C38, 53D12, 58J60

\section{Introduction}

In this article we obtain several results on the screwness of sections of
the orthonormal frame bundle of a three-dimensional space form. We\
highlight the particular one involving a split special Lagrangian
calibration, due to the relevance of this technique. Particularly for
curved spaces with signature, although there have been important
applications (for instance, in relation with optimal transport), we feel
that concrete,
natural examples could be welcome.

Pseudo-Riemannian geometry is often the appropriate setting when dealing
with manifolds possessing two qualitatively different types of tangent
vectors (and a third borderline type), the paramount example being
Lorentzian geometry in relativity, reflecting the distinction between
space-like or time-like curves of events. On the manifold of all rigid
transformations of a three-dimensional space form $M$ we distinguish curves
(that is, motions of $M$) that describe, at each instant, positive or
negative screws. We consider on it a pseudo-Riemannian metric of signature $%
\left( 3,3\right) $ which accounts for this dichotomy.

The importance of extrema in mathematics cannot be overemphasized.
Calibrations detect submanifolds of minimum or maximum volume in a homology
class. The method is powerful because
of the global nature of its results. One can see the history of calibrations
in the book \cite{MorganB}. They grew impressively in strength in the
celebrated paper \cite{harvey-lawson} by Harvey and Lawson of 1982; see also 
\cite{HarveyBook}. Calibrations provided substantial achievements in volume
minimization of submanifolds of Euclidean space (see for instance \cite%
{DadokHarvey,IKSe, MorganRn, XYZe}), and also of several Riemannian manifolds, see for
instance \cite{GZ,gluck-morgan-mackenzie, GVm, HKm,Lm,morgan-ziller, Sm} (the first one
inaugurated the much pursued search for the best among structures of a
certain type on a Riemannian manifold).

In 1989 Mealy introduced in \cite{Mealy} calibrations on pseudo-Riemannian
manifolds. The split special Lagrangian calibrations were rediscovered by
Warren \cite{Warren}, with a new approach set up by Hitchin in \cite{Hi}.
Warren applied them to the volume maximization problem of the special
Lagrangian submanifolds of a pseudo-Euclidean space (see also \cite{HarveyA}%
). For curved manifolds, they were also useful for instance in relation to
optimal transport \cite{warren-ot} or foliations \cite{GSpams}.

\begin{definition}
	\label{hvm}Let $N$ be a pseudo-Riemannian manifold of dimension $2m$ with
	signature $\left( m,m\right) $ and let $M$ be an oriented space-like
	submanifold of $N$ of dimension $m$. One says that $M$ is \textbf{volume
		maximizing} in $N$ if for any open subset $U$ of $M$ with compact closure
	and smooth border $\partial U$, one has that 
	\begin{equation*}
		\operatorname{vol}_{m}\left( U\right) \geq \operatorname{vol}_{m}\left( V\right) 
	\end{equation*}%
	for any space-like submanifold $V$ of $N$ of dimension $m$ with compact
	closure and $\partial V=\partial U$.
		Moreover, $M$ is said to be \textbf{homologically volume maximizing} in $N$ if
	in addition $V$ is required to be homologous to $U$.
\end{definition}

\bigskip

Let $\mathfrak{so}_{3}=\left\{ A\in \mathbb{R}^{3\times 3}\mid
A^{T}=-A\right\} $ be the Lie algebra of $SO_{3}$. There is a linear
isomorphism $\,$%
\begin{equation}
	C:\mathbb{R}^{3}\rightarrow \mathfrak{so}_{3}\text{,\ \ \ \ \ }\xi \mapsto
	C_{\xi }\text{,\ \ \ \ \ \ where\ \ \ \ \ \ \ }C_{\xi }\left( y\right) =\xi
	\times y\text{.}  \label{Cruz}
\end{equation}%
Note that for $x\neq 0$, $\exp \left( C_{x}\right) $ is the rotation through
the angle $\left\vert x\right\vert $ around the oriented axis spanned by $x$.

Let $\mathbb{R}^{3}\rtimes SO_{3}$ be the group of direct (that is,
orientation preserving) isometries of Euclidean space $\mathbb{R}^{3}$, the
multiplication being given by $\left( x,A\right) \left( y,B\right) =\left(
x+Ay,AB\right) $. We endow it with the left invariant pseudo-Riemannian
metric of signature $\left( 3,3\right) $ given at the identity $\left(
0,I_{3}\right) $ by 
\begin{equation}
	\left\langle \left( x,C_{\xi }\right) ,\left( y,C_{\eta }\right)
	\right\rangle =\tfrac{1}{4}\left( \left\langle x,\eta \right\rangle
	+\left\langle y,\xi \right\rangle \right) \label{metricKappaCero}
\end{equation}%
($I_{k}$ is the $\left( k\times k\right) $-identity matrix). We
show below, in Section \ref{GeomSig}, that the metric is actually bi-invariant.

For a vector $X$ tangent to a pseudo-Riemannian manifold we denote $%
\left\Vert X\right\Vert =\left\langle X,X\right\rangle $ and $\left\vert
X\right\vert =\sqrt{\left\langle X,X\right\rangle }$, provided that the
integrand is nonnegative. A map $F$ from $\mathbb{R}^{3}$ to a group is said
to be odd if $F\left( -x\right) =F\left( x\right) ^{-1}$ for all $x\in 
\mathbb{R}^{3}$. By smooth we mean of class $\mathcal{C}^{\infty }$.

We can now state one of the main results of the article.

	\begin{theorem}
	\label{main}Given $c>0$, let 
	\begin{equation}
		\ell :\mathbb{R}\rightarrow \mathbb{R}\text{, \ \ \ \ \ \ \ \ }\ell \left(
		r\right) =c\left( r-\sin r\right) ^{1/3}\text{,}  \label{ell}
	\end{equation}%
	which is a strictly increasing odd smooth function with $\ell ^{\prime
	}\left( 0\right) >0$. Then the map 
	\begin{equation}
		\Phi :\mathbb{R}^{3}\rightarrow \mathbb{R}^{3}\rtimes SO_{3}\text{,\ \ \ \ \
			\ }\Phi \left( ru\right) =\left( \ell \left( r\right) u,\exp \left(
		C_{ru}\right) \right) \text{,}  \label{Phi}
	\end{equation}%
	with $\left\vert u\right\vert =1$ and $r\in \mathbb{R}$, is well defined,
	smooth and odd. Its restrictions to the open ball $\ \left\{ v\in \mathbb{R}%
	^{3}\mid \left\vert v\right\vert <\pi \right\} $ and to the open spherical
	shells 
	\begin{equation*}
		\left\{ v\in \mathbb{R}^{3}\mid 2k\pi <\left\vert v\right\vert <\left(
		2k+1\right) \pi \right\} \text{,}
	\end{equation*}%
	for $k\in \mathbb{N}$, determine homologically volume maximizing space-like
	submanifolds of $\mathbb{R}^{3}\rtimes SO_{3}$.
	\end{theorem}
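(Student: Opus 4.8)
The plan is to exhibit a split special Lagrangian calibration on $G:=\mathbb R^{3}\rtimes SO_{3}$, that is, a closed $3$-form $\alpha$ whose restriction to every oriented space-like $3$-plane $\xi$ satisfies $\alpha|_{\xi}\ge \operatorname{vol}_{\xi}$, with equality exactly when $\xi$ is special Lagrangian of the appropriate phase, and then to show that the submanifolds in the statement are calibrated by $\alpha$. Recall the decomposition $T_{e}G=V\oplus W$ with $V=\mathbb R^{3}$ the translations and $W=\mathfrak{so}_{3}$; by \eqref{metricKappaCero} these are complementary maximal isotropic subspaces in perfect duality, so $G$ carries the canonical split structure of $V\oplus V^{*}$. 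Left-translating a choice of volume forms on $V$ and $W$ and the para-complex structure $J$ (interchanging $V$ and $W$) produces a left-invariant almost para-K\"ahler structure; the symplectic form $\omega$ and the para-holomorphic volume form $\Omega=\operatorname{Re}\Omega+\tau\operatorname{Im}\Omega$ ($\tau^{2}=1$) turn out to be bi-invariant, whence $d\omega=0$ and, using the structure equations of $G$ together with bi-invariance, $d(\operatorname{Re}\Omega)=0$. One then sets $\alpha:=\operatorname{Re}(e^{-\tau\theta}\Omega)$ for a constant phase $\theta$ to be fixed on each region. The split calibration inequality of Mealy \cite{Mealy} and Warren \cite{Warren} gives, for space-like $\xi$, $\alpha|_{\xi}\ge\operatorname{vol}_{\xi}$ with equality iff $\xi$ is special Lagrangian of phase $\theta$. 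Granting this, the theorem follows by Stokes as in Definition \ref{hvm}: if $V$ is space-like, homologous to $U$, and $\partial V=\partial U$, then $U-V$ bounds, so $d\alpha=0$ yields $\int_{U}\alpha=\int_{V}\alpha$, hence $\operatorname{vol}(U)=\int_{U}\alpha=\int_{V}\alpha\ge\operatorname{vol}(V)$.

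Next I would reduce the theorem to two facts about the image $\Sigma$ of $\Phi$ restricted to one of the listed regions $B\subset\mathbb R^{3}$: (a) $\Sigma$ is an embedded space-like submanifold, and (b) $\Sigma$ is special Lagrangian of constant phase. For (a), the bundle projection $G\to\mathbb R^{3}$, $(x,A)\mapsto x$, carries $\Sigma$ to $\{\ell(r)u:ru\in B\}$; since $\ell$ is odd, strictly increasing and $\ell'(0)>0$, the map $ru\mapsto\ell(r)u$ is a diffeomorphism of $B$ onto a ball, respectively a spherical shell, so $\Phi|_{B}$ is an embedding and $\Sigma$ is the graph of the radial frame field $y\mapsto\exp\!\big(C_{\rho(|y|)\,y/|y|}\big)$ with $\rho=\ell^{-1}$. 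The choice of regions is forced: on the sphere $|v|=\pi$ one has $\exp(C_{\pi u})=\exp(C_{-\pi u})$, so $\Phi$ fails to be injective, while on the intermediate shells $\{(2k-1)\pi<|v|<2k\pi\}$ the induced metric is not positive definite (the corresponding section has nonpositive vorticity). Computing the induced metric along $\Phi$ directly from \eqref{metricKappaCero} and recording its sign, one checks positivity on precisely the open ball of radius $\pi$ and the shells $\{2k\pi<|v|<(2k+1)\pi\}$, i.e.\ $\Sigma$ is space-like there.

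For (b) --- the crux --- I would compute $\Phi^{*}\omega$, $\Phi^{*}(\operatorname{Re}\Omega)$ and $\Phi^{*}(\operatorname{Im}\Omega)$ in coordinates $(r,u)$, $u\in S^{2}$. The radial part of $d\Phi$ contributes $\ell'(r)$, while the angular part, governed by the right-trivialized differential of $\exp$ on $\mathfrak{so}_{3}$ (whose area distortion on the plane orthogonal to $u$ is proportional to $1-\cos r=2\sin^{2}(r/2)$), contributes a factor proportional to $1-\cos r$. The Lagrangian condition $\Phi^{*}\omega=0$ is then automatic by radial symmetry and oddness, and the constant-phase condition reduces to a single first-order ODE for $\ell$, namely $(\ell^{3})'=c^{3}(1-\cos r)$, whose odd solution with $\ell(0)=0$ is $\ell(r)^{3}=c^{3}(r-\sin r)$, exactly \eqref{ell}. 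With this $\ell$, one gets $\Phi^{*}\big(\operatorname{Im}(e^{-\tau\theta}\Omega)\big)=0$ and $\Phi^{*}\big(\operatorname{Re}(e^{-\tau\theta}\Omega)\big)=\Phi^{*}\operatorname{vol}_{\Sigma}$ for a suitable constant $\theta$ on each region, so $\Sigma$ is calibrated by $\alpha$.

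The main obstacle is step (b): arranging the para-K\"ahler data on $\mathbb R^{3}\rtimes SO_{3}$ so that $\operatorname{Re}\Omega$ is genuinely closed --- this is where bi-invariance, not merely left-invariance, is essential and must be checked against the structure constants of $G$ --- and then extracting from the constant-phase equation the precise ODE solved by $\ell(r)=c(r-\sin r)^{1/3}$, which requires the explicit Jacobian of $u\mapsto\exp(C_{ru})$ on $S^{2}$. A secondary but necessary point is the bookkeeping that isolates the admissible regions: confirming that the induced metric is positive definite exactly on the ball of radius $\pi$ and the shells $\{2k\pi<|v|<(2k+1)\pi\}$, degenerate or indefinite elsewhere, and that $\Phi$ is an embedding there --- after which the Stokes argument of the first paragraph applies verbatim on each region.
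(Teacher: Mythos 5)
Your overall strategy is the one the paper uses: a split special Lagrangian calibration in the sense of Mealy--Warren built from the volume forms of the two factors, reduction to the radial direction by rotational symmetry, the phase condition forcing the ODE $\left( \ell ^{3}\right) ^{\prime }\propto 1-\cos r$ with odd solution $\ell \left( r\right) =c\left( r-\sin r\right) ^{1/3}$, and Stokes via Mealy's theorem. Your identification of the space-like regions agrees with Lemma \ref{space-like}, and your ODE agrees with the computation in the proof of Theorem \ref{mainLIft}. (The paper lifts to $\mathbb{R}^{3}\rtimes S^{3}$ to compute with quaternions and then descends via $\Pi $; working directly on $\mathbb{R}^{3}\rtimes SO_{3}$ is fine. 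Also, a single constant $C$, fixed by $4c^{3}C^{2}=3$, calibrates the ball and all the shells at once; you do not need a phase per region.)

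There is, however, a genuine flaw at exactly the step you single out as the main obstacle. The calibrating $3$-form is the left-invariant extension of $\tfrac{1}{2}\left( Ce^{1}\wedge e^{2}\wedge e^{3}+\tfrac{1}{C}\,\epsilon ^{1}\wedge \epsilon ^{2}\wedge \epsilon ^{3}\right) $, and you propose to prove it is closed by showing it is bi-invariant. It is not: although the metric (\ref{metricKappaCero}) is bi-invariant, the summand $e^{1}\wedge e^{2}\wedge e^{3}$ is not $\operatorname{Ad}$-invariant. One computes $\operatorname{Ad}_{\left( a,I_{3}\right) }\left( \xi ,C_{\eta }\right) =\left( \xi -\eta \times a,C_{\eta }\right) $, so $\operatorname{Ad}$ does not preserve the isotropic summand $\mathfrak{so}_{3}$; for instance with $a=e_{1}$ the triple $\left( \left( 0,C_{e_{2}}\right) ,\left( 0,C_{e_{3}}\right) ,\left( e_{1},0\right) \right) $, on which $e^{1}\wedge e^{2}\wedge e^{3}$ vanishes, is sent to a triple on which it equals $\det \left( e_{3},-e_{2},e_{1}\right) =1$. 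Hence the verification ``against the structure constants'' that you propose would fail, leaving the closedness of your $\alpha $ unproved. The repair is simpler than what you attempt: as a manifold $\mathbb{R}^{3}\rtimes SO_{3}=\mathbb{R}^{3}\times SO_{3}$, and since $dL_{\left( x,A\right) }$ acts by $A\in SO_{3}$ on each factor, the left-invariant extensions of $e^{1}\wedge e^{2}\wedge e^{3}$ and $\epsilon ^{1}\wedge \epsilon ^{2}\wedge \epsilon ^{3}$ are precisely the pullbacks of the volume forms of $\mathbb{R}^{3}$ and $SO_{3}$ under the two projections, and pullbacks of closed forms are closed. This is how the paper argues. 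A related caution: make sure your $\operatorname{Re}\left( e^{-\tau \theta }\Omega \right) $ really reduces to a combination of $e^{1}\wedge e^{2}\wedge e^{3}$ and $\epsilon ^{1}\wedge \epsilon ^{2}\wedge \epsilon ^{3}$ only (i.e.\ that $\Omega $ is adapted to the null splitting); if mixed terms such as $e^{1}\wedge \epsilon ^{2}\wedge \epsilon ^{3}$ appear, their left-invariant extensions are not pullbacks from a factor and closedness would again be in doubt.
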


\medskip

In Section \ref{SCali} we give the proof of this theorem, passing to the
universal covering, using a split special Lagrangian calibration. The
submanifolds of the statement, being calibrated, are in particular maximal,
that is, they have zero mean curvature. We do not know properties of general
solutions of the equation of Monge-Amp\`{e}re type for maximal space-like
submanifolds, e.g. whether they can project onto $\mathbb{R}^{3}$; 
we refer to the
recent survey \cite{Yuan}.

In Section \ref{GeomSig} we present the geometric significance of the
theorem, namely, that the given submanifolds are maximally screwed, in a
certain sense. We go beyond inquiring further about related subjects. Let $M$
be an oriented three-dimensional Riemannian manifold $M$ and let $SO\left(
M\right) \rightarrow M$ be the bundle of its positively oriented orthonormal
frames. First, we introduce the concept of vorticity of a section of
this bundle. Then, when $M$ is a space form, we relate its positivity with the fact that
the submanifold of $SO\left( M\right) $ determined by the section is
space-like for a split pseudo-Riemannian metric on it.

In Section \ref{ig} we study the intrinsic geometry of (a lift of) the
submanifold $\left. \Phi \right\vert _{B}$ in Theorem \ref{main}, where $B$
is the ball of radius $\pi $ in $\mathbb{R}^{3}$ centered at the origin. It is 
not complete and its metric completion is homeomorphic to the three-sphere.

In Section \ref{oh} we define the notion of optimal vorticity for sections of 
$SO\left( M\right) \rightarrow M$. We give an example for $M=S^{3}$ (which
turns out to be also homologically volume maximizing, using a common
one-point calibration) and show that no local section has that property for
Euclidean or hyperbolic space.

I would like to thank the anonymous reviewer for their comments.

\section{Calibrations and the proof of the theorem\label{SCali}}

We recall some definitions and results from Mealy in \cite{Mealy} (see also 
\cite{HarveyA}), which are analogous to those for Riemannian manifolds in 
\cite{harvey-lawson}. Let $N$ be a split pseudo-Riemannian manifold of
dimension $2m$ (by split we mean that it has signature $\left( m,m\right)$). 
For $q\in N$, an $m$-vector $\xi $ in $T_{q}N$ is said to be 
space-like if the subspace generated by $\xi $ is space-like (we are not
considering the induced indefinite inner product on $\Lambda ^{m}\left(
T_{q}N\right) $).

\begin{definition}
	Let $N$ be a split pseudo-Riemannian manifold of dimension $2m$. A closed $m$%
	-form $\psi $ on $N$ is called a \textbf{calibration} if $\psi _{q}\left( \xi
	\right) \geq \operatorname{vol}_{m}\left( \xi \right) $ for any space-like $m$-vector $%
	\xi $ in $T_{q}N$ with $\psi _{q}(\xi )>0$, for all $q\in N$.
	
	Let $\psi $ be a calibration on $N$ and let $M$ be an oriented space-like
	submanifold of $N$ of dimension $m$. Then $M$ is said to be \textbf{calibrated} 
	 by $\psi $ if $\psi _{q}\left( \xi \right) =\operatorname{vol}_{m}\left( \xi
	\right) $ for any positively oriented $m$-vector $\xi $ generating $T_{q}M$
	with $\psi _{q}(\xi )>0$, for all $q\in M$.
\end{definition}

\begin{theorem}
	\label{tfc}\emph{\cite{Mealy}} Let $N$ be a split pseudo-Riemannian manifold of
	dimension $2m$ and let $\psi $ be a calibration on $N$. If $M$ is an
	oriented space-like submanifold of $N$ of dimension $m$ which is calibrated
	by $\psi $, then $M$ is homologically volume maximizing in $N$.
\end{theorem}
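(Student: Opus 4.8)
The plan is to run the classical three-step proof of the fundamental theorem of calibrations, transposed to split signature; the only genuinely new point is the sign bookkeeping forced by the conditional inequality built into the notion of a split calibration. Fix an open $U\subset M$ with compact closure and smooth boundary, together with a competing space-like submanifold $V$ of dimension $m$ with $\partial V=\partial U$ and $V$ homologous to $U$. Let $\xi_M$ denote the positively oriented unit $m$-vector field tangent to $M$; along $M$ it satisfies $\psi(\xi_M)>0$, so the calibrated hypothesis applies and gives $\psi_q(\xi_M)=\operatorname{vol}_m(\xi_M)=1$ for every $q\in M$. Integrating against the induced volume element $d\sigma_M$ (positive definite because $M$ is space-like) yields $\int_U\psi=\int_U \psi(\xi_M)\,d\sigma_M=\operatorname{vol}_m(U)$.

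Next I would invoke that $\psi$ is closed. Since $\partial U=\partial V$, the difference $U-V$ is a cycle, and the homology hypothesis supplies an $(m+1)$-chain $W$ with $\partial W=U-V$. Stokes' theorem together with $d\psi=0$ then gives $\int_U\psi-\int_V\psi=\int_{\partial W}\psi=\int_W d\psi=0$, hence $\operatorname{vol}_m(U)=\int_U\psi=\int_V\psi$.

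It then remains to bound $\int_V\psi$ from below by $\operatorname{vol}_m(V)$. Writing $\xi_V$ for the positively oriented unit $m$-vector field along $V$ and $d\sigma_V$ for its (positive-definite) induced volume element, we have $\int_V\psi=\int_V\psi(\xi_V)\,d\sigma_V$ and $\operatorname{vol}_m(V)=\int_V\operatorname{vol}_m(\xi_V)\,d\sigma_V$. Thus it suffices to establish $\psi_q(\xi_V)\ge\operatorname{vol}_m(\xi_V)$ pointwise along $V$, which is exactly the defining inequality of the calibration at each point where $\psi_q(\xi_V)>0$. Granting this, the three steps combine to $\operatorname{vol}_m(U)=\int_V\psi\ge\operatorname{vol}_m(V)$, which is the assertion.

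The hard part is precisely this last, conditional, inequality, and it is what marks the departure from the Riemannian theorem. There the comass bound $\psi(\xi)\le\operatorname{vol}_m(\xi)$ holds for every unit $m$-vector irrespective of orientation, so the comparison step is automatic; here, applying the calibration inequality to both $\xi$ and $-\xi$ only yields, for a unit space-like $\xi$, that $\psi_q(\xi)\in(-\infty,-\operatorname{vol}_m(\xi)]\cup\{0\}\cup[\operatorname{vol}_m(\xi),+\infty)$, so a naive pointwise estimate fails wherever $\psi(\xi_V)\le 0$. The comparison therefore hinges on orienting the competitor $V$ by the orientation that renders $\psi(\xi_V)>0$ on the space-like cone, and on checking that this orientation is the one compatible with $\partial V=\partial U$ and the homology, so that the Stokes step above still produces $\int_U\psi=\int_V\psi$ with the correct sign. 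I expect the resolution to rest on the fact that, for the calibrations at issue, each space-like $m$-plane admits a unique orientation with $\psi>0$; this simultaneously makes the conditional clause a genuine normalization rather than a restriction and keeps the sign bookkeeping consistent throughout.
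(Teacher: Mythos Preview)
The paper does not prove this theorem; it is quoted from Mealy without argument, so there is no in-paper proof to compare against. Your outline is the standard Harvey--Lawson scheme transposed to split signature, and it is correct through the Stokes step $\int_U\psi=\int_V\psi=\operatorname{vol}_m(U)$. You have also correctly isolated the one genuine difficulty: the defining inequality $\psi_q(\xi)\ge\operatorname{vol}_m(\xi)$ is \emph{conditional} on $\psi_q(\xi)>0$, so the pointwise comparison on the competitor $V$ is not automatic.

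The gap is in your last paragraph. You propose to re-orient $V$ so that $\psi(\xi_V)>0$ and then check compatibility with $\partial V=\partial U$; but the orientation of $V$ is already fixed by the boundary and homology data, so you are not free to change it, and your closing conjecture (that $\psi$ never vanishes on space-like $m$-planes) is neither part of the definition nor needed. The actual resolution uses your own trichotomy $\psi_q(\xi_V)\in(-\infty,-1]\cup\{0\}\cup[1,\infty)$ directly: the function $q\mapsto\psi_q(\xi_V(q))$ is continuous on $V$, and on each connected component its image lies in a single connected component of that set. Thus on each component of $V$ one has $\psi(\xi_V)\ge 1$ everywhere, or $\psi(\xi_V)\le -1$ everywhere, or $\psi(\xi_V)\equiv 0$. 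Components of the last two types contribute nonpositively to $\int_V\psi$, while on the first type $\int\psi$ dominates the volume; combining this with $\int_V\psi=\operatorname{vol}_m(U)>0$ from the Stokes step is what forces $\operatorname{vol}_m(V)\le\operatorname{vol}_m(U)$ (immediately when $V$ is connected, and with a short additional argument otherwise). This continuity-and-connectedness step is the missing idea in your proposal.
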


Next we introduce the split special Lagrangian calibration on a split
Euclidean space, which appeared first in the work of Mealy \cite{Mealy}. We
consider the presentation of this calibration in null coordinates given in 
\cite{Warren}.

\begin{proposition}
	\label{warren}\emph{\cite{Warren}} Let $\mathbb{R}^{m,m}$ be $\mathbb{R} ^{m}\times 
	\mathbb{R}^{m}$ endowed with the split inner product whose associated square
	norm is $\left\Vert \left( x,y\right) \right\Vert =\left\langle
	x,y\right\rangle $, where $\left\langle .,.\right\rangle $ is the canonical
	inner product on $\mathbb{R}^{m}$. For any $C>0$, the $m$-form 
	\begin{equation*}
		\frac{1}{2}\left( Ce^{1}\wedge \dots \wedge e^{m}+\frac{1}{C}~\epsilon
		^{1}\wedge \dots \wedge \epsilon ^{m}\right)
	\end{equation*}
	is a calibration on $\mathbb{R}^{m,m}$ 
	(here $\left\{ e^{1},\dots,e^{m},\epsilon ^{1},\dots,\epsilon ^{m}\right\} $ is the
	dual of the canonical basis of $\mathbb{R}^{2m}$). Moreover, a
	space-like $m$-vector $\xi $ is calibrated by it if and only if
	\begin{equation}
		\epsilon ^{1}\wedge \dots \wedge \epsilon ^{m}\left( \xi \right)
		=C^{2}\,e^{1}\wedge \dots \wedge e^{m}\left( \xi \right) \text{,}
		\label{EqWarren}
	\end{equation}
	that is, $\xi $ is special Lagrangian.
\end{proposition}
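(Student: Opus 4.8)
The plan is to verify the two requirements for a calibration -- closedness and the pointwise comass inequality -- and then analyze the equality case, by reducing an arbitrary space-like $m$-plane to a graph over the first factor. Since $\psi$ has constant coefficients it is closed, so only the pointwise inequality needs work. Write the canonical basis as $e_1,\dots,e_m,\epsilon_1,\dots,\epsilon_m$, so that $\langle e_i,\epsilon_j\rangle=\tfrac12\delta_{ij}$ and $\langle e_i,e_j\rangle=\langle\epsilon_i,\epsilon_j\rangle=0$. Let $\xi=v_1\wedge\cdots\wedge v_m$ be a space-like simple $m$-vector with $v_i=(a_i,b_i)$, and let $A,B$ be the matrices whose columns are the $a_i,b_i$. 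A key first step is to observe that a space-like subspace contains no nonzero null vector; since every $(a,0)$ and every $(0,b)$ is null, both coordinate projections are injective, hence $A$ and $B$ are invertible and the plane is the graph $\{(w,Pw):w\in\mathbb{R}^m\}$ with $P=BA^{-1}$.

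Next I would express the three relevant quantities through $A$ and $P$. Evaluating the top forms gives $e^1\wedge\cdots\wedge e^m(\xi)=\det A$ and $\epsilon^1\wedge\cdots\wedge\epsilon^m(\xi)=\det B=\det P\,\det A$, so $\psi(\xi)=\tfrac{\det A}{2}\left(C+\tfrac1C\det P\right)$. The Gram matrix is $G_{ij}=\langle v_i,v_j\rangle=\tfrac12(A^TB+B^TA)_{ij}=(A^TSA)_{ij}$, where $S=\tfrac12(P+P^T)$ is the symmetric part of $P$; by congruence the plane is space-like exactly when $S>0$, and then $\operatorname{vol}_m(\xi)=\sqrt{\det G}=|\det A|\sqrt{\det S}$.

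The heart of the argument is two elementary inequalities. Writing $P=S+K$ with $K=\tfrac12(P-P^T)$ antisymmetric and $S>0$, I would factor $\det P=\det S\cdot\det(I+S^{-1/2}KS^{-1/2})$; since $S^{-1/2}KS^{-1/2}$ is antisymmetric its eigenvalues are purely imaginary, say $\pm i\lambda_j$, so $\det(I+S^{-1/2}KS^{-1/2})=\prod_j(1+\lambda_j^2)\ge1$, with equality iff $K=0$. Hence $\det P\ge\det S>0$. In particular $\det P>0$, so $\psi(\xi)\neq0$ and $\psi(\xi)>0$ precisely when $\det A>0$; assuming this, AM--GM applied to the positive numbers $C$ and $\tfrac1C\det P$ gives $\tfrac12\left(C+\tfrac1C\det P\right)\ge\sqrt{\det P}\ge\sqrt{\det S}$, and multiplying by $\det A>0$ yields $\psi(\xi)\ge|\det A|\sqrt{\det S}=\operatorname{vol}_m(\xi)$. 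This is the required comass inequality, so $\psi$ is a calibration.

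Finally, equality $\psi(\xi)=\operatorname{vol}_m(\xi)$ forces equality in both steps: the AM--GM step gives $\det P=C^2$, and the determinant step gives $K=0$, i.e. $P=P^T$, which is the Lagrangian condition. Conversely these two together give equality, so $\xi$ is calibrated if and only if it is special Lagrangian; and once $P$ is symmetric the phase condition $\det P=C^2$ is exactly $\det B=C^2\det A$, namely \eqref{EqWarren}. I expect the main subtlety to be precisely this equality analysis: the rigidity of the determinant inequality ($K=0$) is what extracts the Lagrangian condition, so that \eqref{EqWarren} records the remaining special (phase) part, and one must track the sign of $\det A$ throughout in order to use the hypothesis $\psi(\xi)>0$ correctly.
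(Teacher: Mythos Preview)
The paper does not supply its own proof of this proposition: it is quoted from \cite{Warren} (and ultimately \cite{Mealy}) and used as a black box, so there is nothing in the paper to compare against. Your argument is therefore to be judged on its own, and it is correct. Representing a space-like $m$-plane as a graph $(w,Pw)$ via the injectivity of both null projections, writing the Gram matrix as $A^{T}SA$ with $S=\tfrac12(P+P^{T})$, and then combining the determinant inequality $\det P\ge\det S$ (from the antisymmetric perturbation $S^{-1/2}KS^{-1/2}$) with the AM--GM step is exactly the line of reasoning behind Warren's presentation in null coordinates; you have reconstructed it cleanly.

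One point worth flagging. Your equality analysis shows that calibration forces \emph{two} conditions: $P=P^{T}$ (the Lagrangian condition with respect to the para-K\"ahler form) and $\det P=C^{2}$ (the phase condition). The proposition as stated in the paper records only \eqref{EqWarren}, labelling it ``special Lagrangian''; strictly speaking, \eqref{EqWarren} alone is not sufficient, as your own computation shows (one can have $\det P=C^{2}$ with $K\neq0$, and then $\psi(\xi)>\operatorname{vol}_m(\xi)$). You handle this correctly by extracting the Lagrangian condition from the rigidity of the determinant inequality and then interpreting \eqref{EqWarren} as the residual phase equation. In the paper's applications (Theorem~\ref{mainLIft}) the submanifolds considered are automatically Lagrangian, so only \eqref{EqWarren} needs to be checked there; but your more careful statement of the equality case is the right one in general.
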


\bigskip

To prove Theorem \ref{main} it will be convenient to pass to the universal
covering of $\mathbb{R}^{3}\rtimes SO_{3}$ and compute with quaternions.
Let $\mathbb{H}$ be the skew field of quaternions and let $S^{3}=\left\{
q\in \mathbb{H}\mid \left\vert q\right\vert =1\right\} $. We identify as
usual $\mathbb{R}^{4}=\mathbb{R}+\operatorname{Im}\left( \mathbb{H}\right) $ and
consider the canonical basis $\left\{ 1,i,j,k\right\} $. Let $\mathbb{R}%
^{3}\rtimes S^{3}$ be the product $\mathbb{R}^{3}\times S^{3}$ endowed with
the multiplication 
\begin{equation*}
	\left( x,p\right) \left( y,q\right) =\left( x+py\bar{p},pq\right) \text{,}
\end{equation*}%
where $\bar{q}=q^{-1}$. For further reference we compute
\begin{equation}
	\left( dL_{\left( x,p\right) }\right) _{\left( 0,1\right) }\left( \xi ,\eta
	\right)  =\left. \frac{d}{dt}\right\vert _{0}\left( x+pt\xi \bar{p}%
	,pe^{t\eta }\right) =\left( p\xi \bar{p},p\eta \right) \text{.}  \label{dL}
\end{equation}

We endow the Lie group $\mathbb{R}^{3}\rtimes S^{3}$ with the left invariant
split pseudo-Riemannian metric given at the identity $\left( 0,1\right) $ by%
\begin{equation}
	\left\langle \left( x,\xi \right) ,\left( y,\eta \right) \right\rangle =%
	\tfrac{1}{2}\left( \left\langle x,\eta \right\rangle +\left\langle y,\xi
	\right\rangle \right) \text{.}  \label{prmS3}
\end{equation}%
for $x,y\in \operatorname{Im}\left( \mathbb{H}\right) $ and $\xi ,\eta \in
T_{1}S^{3}=\operatorname{Im}\mathbb{H}$.

\smallskip 

Let $I:S^{3}\rightarrow SO_{3}$, $I\left( q\right) =I_{q}$, with $%
I_{q}\left( x\right) =qx\bar{q}$ for $x\in \operatorname{Im}\mathbb{H}\cong \mathbb{R%
}^{3}$, and let 
\begin{equation}
	\Pi :\mathbb{R}^{3}\rtimes S^{3}\rightarrow \mathbb{R}^{3}\rtimes SO_{3}%
	\text{,\ \ \ \ \ \ \ \ }\Pi \left( x,q\right) =\left( x,I_{q}\right) \text{.}
	\label{Pi}
\end{equation}%
Both are smooth two-to-one surjective Lie group morphisms.

\begin{proposition}
	\label{PiIso}The metric above on $\mathbb{R}^{3}\rtimes S^{3}$ is
	bi-invariant. In particular, we have the following rotational symmetry: the
	left action of $S^{3}$ on $\mathbb{R}^{3}\rtimes S^{3}$ given by%
	\begin{equation}
		\left( q,\left( x,p\right) \right) \mapsto L_{\left( 0,q\right) }R_{\left( 0,%
			\bar{q}\right) }\left( x,p\right) =\left( qx\bar{q},qp\bar{q}\right) =\left(
		I_{q}\left( x\right) ,qp\bar{q}\right)   \label{action}
	\end{equation}%
	is by isometries. Also, $\Pi $ is a local isometry if $\mathbb{R}^{3}\rtimes
	SO_{3}$ is endowed with the metric defined in \emph{(\ref{metricKappaCero})}.
\end{proposition}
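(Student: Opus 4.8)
The plan is to use the classical criterion that a left-invariant pseudo-Riemannian metric on a Lie group is bi-invariant if and only if $\operatorname{ad}_X$ is skew-symmetric for every $X$ in the Lie algebra, equivalently if and only if the trilinear form $(X,Y,Z)\mapsto\langle[X,Y],Z\rangle$ is totally alternating. So the first step is to identify the Lie algebra $\mathfrak{g}$ of $\mathbb{R}^3\rtimes S^3$ with $\operatorname{Im}\mathbb{H}\oplus\operatorname{Im}\mathbb{H}$ (the translation summand and $T_1S^3$) and to compute its bracket. Differentiating the conjugation action $q\cdot x=qx\bar q$ at $q=1$ shows that $\eta\in T_1S^3$ acts on the translation part by $x\mapsto\eta x-x\eta=2\,\eta\times x=C_{2\eta}(x)$ (using (\ref{Cruz}) and the identity $ab-ba=2\,a\times b$ for $a,b\in\operatorname{Im}\mathbb{H}$), while on $T_1S^3$ itself the bracket is $[\eta_1,\eta_2]=\eta_1\eta_2-\eta_2\eta_1=2\,\eta_1\times\eta_2$. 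Hence
\[
[(\xi_1,\eta_1),(\xi_2,\eta_2)]=\bigl(2\,\eta_1\times\xi_2-2\,\eta_2\times\xi_1,\ 2\,\eta_1\times\eta_2\bigr).
\]

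Substituting this into (\ref{prmS3}) the factor $\tfrac12$ absorbs the $2$'s and one gets
\[
\langle[(\xi_1,\eta_1),(\xi_2,\eta_2)],(\xi_3,\eta_3)\rangle
=\langle\eta_1\times\xi_2,\eta_3\rangle-\langle\eta_2\times\xi_1,\eta_3\rangle+\langle\eta_1\times\eta_2,\xi_3\rangle,
\]
a sum of scalar triple products. A short computation using the antisymmetry of the scalar triple product shows that this expression changes sign whenever two of the three arguments $(\xi_i,\eta_i)$ are interchanged, so it is totally alternating; therefore $\operatorname{ad}$ is skew-symmetric and the metric (\ref{prmS3}) is bi-invariant. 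I expect this verification to be the main (and essentially only) point of the proof: there is no conceptual obstacle, just careful bookkeeping with the quaternion/cross-product conventions and the ordering of arguments. (Conceptually, this invariant form is, up to scale, the classical reciprocal-screw, or Klein, pairing on the Lie algebra of the group of Euclidean motions, namely the pairing of the infinitesimal translation and rotation parts.)

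The rotational symmetry is then immediate: for fixed $q\in S^3$ the map in (\ref{action}) is the composition $L_{(0,q)}\circ R_{(0,\bar q)}$ of a left and a right translation, each of which is an isometry because the metric is bi-invariant; and the group law gives directly $L_{(0,q)}R_{(0,\bar q)}(x,p)=(0,q)(x,p)(0,\bar q)=(qx\bar q,\,qp\bar q)=(I_q(x),\,qp\bar q)$.

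Finally, $\Pi$ in (\ref{Pi}) is a Lie group morphism which is a local diffeomorphism (being two-to-one and surjective), so $\Pi\circ L_{(x,p)}=L_{\Pi(x,p)}\circ\Pi$; since both (\ref{prmS3}) and (\ref{metricKappaCero}) are left-invariant, it is enough to check that $d\Pi_{(0,1)}$ is a linear isometry. Here $d\Pi_{(0,1)}(\xi,\eta)=(\xi,dI_1(\eta))$, and from $I_q(x)=qx\bar q$ one gets, exactly as above, $dI_1(\eta)=C_{2\eta}$; evaluating (\ref{metricKappaCero}) on $(\xi,C_{2\eta})$ and $(\xi',C_{2\eta'})$ gives $\tfrac14(\langle\xi,2\eta'\rangle+\langle\xi',2\eta\rangle)=\tfrac12(\langle\xi,\eta'\rangle+\langle\xi',\eta\rangle)$, which is the value of (\ref{prmS3}) on $(\xi,\eta)$ and $(\xi',\eta')$ — the factor $2$ produced by $dI_1$ being compensated precisely by the difference between the normalizations $\tfrac14$ and $\tfrac12$. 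Thus $\Pi$ is a local isometry (indeed a Riemannian two-fold covering), and as a byproduct (\ref{metricKappaCero}) is bi-invariant as well, being the push-forward under the surjective homomorphism $\Pi$ of the bi-invariant metric (\ref{prmS3}).
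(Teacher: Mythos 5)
Your proof is correct, and it reverses the direction of the paper's argument in a way worth noting. The paper proves the local-isometry claim first (via the same computation $dI_{1}(\eta)=C_{2\eta}$ and the matching of the normalizations $\tfrac14$ versus $\tfrac12$ that you carry out), then obtains bi-invariance of (\ref{prmS3}) by \emph{pulling back} the bi-invariance of the metric (\ref{metricKappaCero}) on $\mathbb{R}^{3}\rtimes SO_{3}$, which is itself established later, in Section \ref{GeomSig}, from the bracket formula (\ref{corZZ}) in the matrix model $\mathfrak{g}_{0}$; the key point there is that $d\Pi_{(0,1)}$ is an isometric Lie algebra isomorphism and that bi-invariance of a left-invariant metric on a connected group depends only on the inner product on the Lie algebra. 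You instead compute the semidirect-product bracket directly on $\operatorname{Im}\mathbb{H}\oplus\operatorname{Im}\mathbb{H}$ and check total alternation of $\langle[\cdot,\cdot],\cdot\rangle$ by hand (your bracket formula and the resulting sum of three scalar triple products are correct, and the alternation does check out), then \emph{push forward} to conclude bi-invariance of (\ref{metricKappaCero}) as a byproduct. Your route is self-contained and avoids the paper's forward reference to Section \ref{GeomSig}; the paper's route does the bracket computation only once, in the matrix model where it is reused for all three curvatures $\kappa$. The treatment of the rotational symmetry (composition of a left and a right translation) and of the local isometry is the same in both.
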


\begin{proof}
	We prove first the last assertion. We compute $\left( dI\right)
	_{1}:T_{1}S^{3}=\operatorname{Im}\mathbb{H}\rightarrow \mathfrak{so}_{3}$: For $z\in 
	\operatorname{Im}\mathbb{H}$ we have 
	\begin{equation*}
		dI_{1}\left( \xi \right) \left( z\right) =\left. \frac{d}{dt}\right\vert
		_{0}e^{t\xi }ze^{-t\xi }=\xi z-z\xi =2\xi \times z=C_{2\xi }\left( z\right) 
		\text{.}
	\end{equation*}%
	Hence, 
	\begin{equation*}
		\left\Vert d\Pi _{\left( 0,1\right) }\left( x,\xi \right) \right\Vert
		=\left\Vert \left( x,C_{2\xi }\right) \right\Vert =\tfrac{1}{2}\left\langle
		2\xi ,\eta \right\rangle =\left\langle \xi ,\eta \right\rangle =\left\Vert
		\left( x,\xi \right) \right\Vert \text{.}
	\end{equation*}%
	Then, using the left invariance we have that $\Pi $ is a local isometry.
	
	In the paragraph containing expression (\ref{33}) below we show in
	particular that the metric on $\mathbb{R}%
	^{3}\rtimes SO_{3}$ (called $G_{0}$ there) defined in (\ref{metricKappaCero})  
	is bi-invariant. Now, the map $%
	d\Pi _{\left( 0,1\right) }$ is a linear isometric Lie algebra isomorphism.
	Since for connected Lie groups (as in our case) bi-invariance depends only
	on the inner product on the Lie algebra, the metric on $\mathbb{R}%
	^{3}\rtimes S^{3}$ is also bi-invariant. The second assertion is an
	immediate corollary of that property.
\end{proof}


We consider maps of the form%
\begin{equation*}
	\varphi :\mathbb{R}^{3}\rightarrow \mathbb{R}^{3}\rtimes S^{3}\text{,\ \ \ \
		\ \ }\varphi \left( ru\right) =\left( l\left( r\right) u,\exp \left( \theta
	\left( r\right) u\right) \right) \text{,}
\end{equation*}%
with $\left\vert u\right\vert =1$ and $r\in \mathbb{R}$, where $l:\mathbb{R}%
\rightarrow \mathbb{R}$ and $\theta :\mathbb{R}\rightarrow \mathbb{R}$ are
two smooth odd functions. We call the map $\varphi $ a \emph{screw-radial map%
} from $\mathbb{R}^{3}$ to $\mathbb{R}^{3}\rtimes S^{3}$.

From the following lemma one deduces conditions on $l$ and $\theta $ for
restrictions of $\varphi $ to open sets of $\mathbb{R}^{3}$ to be space-like
submanifolds of $\mathbb{R}^{3}\rtimes S^{3}$.

\begin{lemma}
	\label{space-like}The map $\varphi $ is well defined, smooth and odd. For $%
	r>0$ and $\left\vert u\right\vert =1$, the image of $\left( d\varphi \right)
	_{ru}$ is a space-like three-dimensional subspace of $T_{\varphi \left(
		ru\right) }\left( \mathbb{R}^{3}\rtimes S^{3}\right) $ if and only if 
	\begin{equation*}
		l^{\prime }\left( r\right) >0\text{,\ \ \ \ \ }\theta ^{\prime }\left(
		r\right) >0\text{ \ \ \ \ \ and \ \ \ \ }k\pi <\theta \left( r\right) <k\pi +%
		\frac{\pi }{2}
	\end{equation*}%
	for some $k\in \mathbb{N}\cup \left\{ 0\right\} $. For $r=0$, the same
	holds, but dropping the third condition.
\end{lemma}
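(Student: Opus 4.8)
\emph{Overview.} The plan is: (i) dispatch smoothness and oddness by the standard trick for odd functions plus one quaternion identity; (ii) compute $(d\varphi)_{ru}$ on the natural basis of $T_{ru}\mathbb{R}^{3}$ consisting of the radial vector $u$ and two tangential ones; (iii) left-translate the resulting tangent vectors to the identity and write down the Gram matrix with respect to \eqref{prmS3}; (iv) observe that this Gram matrix is diagonal and read off the condition for positive definiteness.

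\emph{Smoothness and oddness.} Since $l$ and $\theta$ are odd and smooth, $r\mapsto l(r)/r$ and $r\mapsto\theta(r)/r$ extend to smooth even functions, hence to smooth functions of $r^{2}=\langle v,v\rangle$; therefore $\varphi(v)=\bigl(\tfrac{l(|v|)}{|v|}v,\exp(\tfrac{\theta(|v|)}{|v|}v)\bigr)$ is smooth on all of $\mathbb{R}^{3}$, with $\varphi(0)=(0,1)$. For oddness, write $v=ru$ with $r=|v|>0$. In $\mathbb{R}^{3}\rtimes S^{3}$ one has $(x,p)^{-1}=(-\bar p x p,\bar p)$, and since $u$ commutes with $p=\exp(\theta(r)u)$ we get $\bar p(l(r)u)p=l(r)u$, so $\varphi(v)^{-1}=(-l(r)u,\exp(-\theta(r)u))=\varphi(-v)$, using that $l,\theta$ are odd.

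\emph{The differential and the Gram matrix.} Fix $r>0$, $|u|=1$, and complete $u$ to a positively oriented orthonormal basis $\{u,w,n\}$ of $\mathbb{R}^{3}\cong\operatorname{Im}\mathbb{H}$ (so $uw=n$, $wn=u$, $nu=w$). Differentiating $\varphi$ along $t\mapsto(r+t)u$ and using $\exp(su)=\cos s+u\sin s$ gives $(d\varphi)_{ru}(u)=(l'(r)u,\theta'(r)\,up)$ with $p=\exp(\theta(r)u)$. Differentiating along a tangential curve such as $t\mapsto r\bigl(u\cos(t/r)+w\sin(t/r)\bigr)$ — where $|v|$, and hence $l$ and $\theta$, stay constant — only the unit vector $\nu(t)$ inside $l(r)\nu$ and inside $\exp(\theta(r)\nu)$ moves, so $(d\varphi)_{ru}(w)=\bigl(\tfrac{l(r)}{r}w,\tfrac{\sin\theta(r)}{r}w\bigr)$, and likewise with $n$ in place of $w$. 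Pulling these back to $(0,1)$ by $dL^{-1}$ (from \eqref{dL}, $dL_{(x,p)}^{-1}(X,\Xi)=(\bar p X p,\bar p\Xi)$), using $\bar p u p=u$ and $\bar p w p=\cos 2\theta(r)\,w-\sin 2\theta(r)\,n$ (conjugation by $p$ rotates the $\langle w,n\rangle$-plane through angle $2\theta(r)$), one obtains three explicit vectors in $\operatorname{Im}\mathbb{H}\times\operatorname{Im}\mathbb{H}$. Feeding them into \eqref{prmS3}: the vector from $u$ is $\langle\cdot,\cdot\rangle$-orthogonal to those from $w,n$ (each pairing matches $u$ against an element of $\langle w,n\rangle$), and the $w$–$n$ cross term cancels after the addition formulas for $\sin,\cos$; the Gram matrix is $\operatorname{diag}\!\bigl(l'(r)\theta'(r),\ \tfrac{l(r)\sin 2\theta(r)}{2r^{2}},\ \tfrac{l(r)\sin 2\theta(r)}{2r^{2}}\bigr)$. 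Hence $\operatorname{Im}(d\varphi)_{ru}$ is a space-like $3$-plane iff $l'(r)\theta'(r)>0$ and $l(r)\sin 2\theta(r)>0$, which are exactly the three conditions in the statement (the admissible sign pattern being pinned down by oddness, $l$ and $\theta$ vanishing at $0$, so that $l(r)\sin 2\theta(r)>0$ amounts to $k\pi<\theta(r)<k\pi+\tfrac{\pi}{2}$ for some $k\ge 0$). For $r=0$ one computes directly $(d\varphi)_{0}(z)=(l'(0)z,\theta'(0)z)$ (as $d\exp_{0}=\mathrm{id}$), whose Gram matrix is $l'(0)\theta'(0)\,I_{3}$, giving the criterion with the third condition omitted.

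\emph{Main obstacle.} The only delicate computation is the tangential part of the differential together with its left-translation to the identity: one differentiates the quaternionic exponential along a curve that is \emph{not} a one-parameter subgroup and then performs the conjugations, and a stray sign or a stray factor $\exp(\theta(r)u)$ there would destroy the clean diagonal form of the Gram matrix. Everything after that is bookkeeping with the addition formulas.
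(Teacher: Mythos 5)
Your proof is correct and follows essentially the same route as the paper's: differentiate $\varphi$ along a radial and a tangential curve, pull the resulting vectors back to the identity via $dL^{-1}$, and read off space-likeness from the diagonal Gram matrix $\operatorname{diag}\bigl(l'(r)\theta'(r),\ \tfrac{l(r)\sin 2\theta(r)}{2r^{2}},\ \tfrac{l(r)\sin 2\theta(r)}{2r^{2}}\bigr)$. The only cosmetic differences are that the paper first reduces to $u=i$ using the rotational symmetry of Proposition \ref{PiIso} and gets the vanishing of the $w$--$n$ cross term implicitly (since $\left\Vert d\varphi_{ri}(z)\right\Vert$ is the same for every unit $z\perp i$), and treats $r=0$ by a limit rather than by your direct computation.
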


\begin{proof}
	We have $\varphi \left( 0\right) =\left( 0,1\right) $. For $0\neq v\in 
	\mathbb{R}^{3}$ we write $v=r\left( v/r\right) $, with $r=\left\vert
	v\right\vert $, and check that $\varphi \left( \left( -r\right) \left(
	-v/r\right) \right) =\varphi \left( r\left( v/r\right) \right) $, hence $%
	\varphi $ is well defined. The map $\varphi $ is odd since $e^{sw}$ commutes
	with $w$ for all $w\in \operatorname{Im}\left( \mathbb{H}\right) $ and $s\in \mathbb{%
		R}$.
	Since $l$ is smooth and odd, the function $\mathbb{R}\rightarrow \mathbb{R}$
	given by $s\mapsto l\left( \sqrt{\left\vert s\right\vert }\right) /\sqrt{%
		\left\vert s\right\vert }$, $0\mapsto l^{\prime }\left( 0\right) $, is
	smooth; similarly for $\theta $. Hence, $\varphi $ is smooth.
	
	Notice that $\varphi $ is equivariant by the left actions of $S^{3}$ on $%
	\mathbb{R}^{3}=\operatorname{Im}\left( \mathbb{H}\right) $ given by $\left(
	q,x\right) \mapsto qx\bar{q}=I_{q}\left( x\right) $ and that on $\mathbb{R}%
	^{3}\rtimes S^{3}$ in (\ref{action}). The first action is clearly isometric
	and transitive on the two-sphere and the second one is isometric by
	Proposition \ref{PiIso}. Thus, it suffices to analyze when $d\varphi _{ri}$
	is one-to one and its image is space-like for $r\geq 0$. We consider first
	the case $r>0$ and then study the limit for $r\rightarrow 0^{+}$. We compute%
	\begin{equation}
		d\varphi _{ri}\left( i\right) =\left. \frac{d}{dt}\right\vert _{r}\left(
		l\left( t\right) i,e^{\theta \left( t\right) i}\right) =\left( l^{\prime
		}\left( r\right) i,\theta ^{\prime }\left( r\right) ie^{\theta \left(
			r\right) i}\right)   \label{dFi}
	\end{equation}
	
	Now let $z$ be a unit vector in span$\left\{ j,k\right\} $ and let $\alpha
	\left( t\right) =\cos \left( t/r\right) i+\sin \left( t/r\right) z$. This
	curve satisfies 
	\begin{equation*}
		r\alpha \left( 0\right) =ri\text{,\ \ \ \ }r\alpha ^{\prime }\left( 0\right)
		=z\text{\ \ \ \ \ \ and \ \ \ \ \ }e^{\theta \left( r\right) \alpha \left(
			t\right) }=\cos \theta \left( r\right) +\sin \theta \left( r\right) \alpha
		\left( t\right) \text{.}
	\end{equation*}%
	Hence, 
	\begin{eqnarray}
		d\varphi _{ri}\left( z\right) &=&\left. \frac{d}{dt}\right\vert _{0}\varphi
		\left( r\alpha \left( t\right) \right) =\left. \frac{d}{dt}\right\vert
		_{0}\left( l\left( r\right) \alpha \left( t\right) ,e^{\theta \left(
			r\right) \alpha \left( t\right) }\right) \\ \label{dFu}
		&=&\frac{1}{r}\left( l\left( r\right)
		z,\sin \theta \left( r\right) ~z\right) \text{.}  \notag
	\end{eqnarray}%
	Using (\ref{dL}), we have that $\left( dL_{\varphi \left( ri\right) }\right)
	_{\left( 0,1\right) }\left( \xi ,\eta \right) =\left( e^{\theta \left(
		r\right) i}\xi e^{-\theta \left( r\right) i},e^{\theta \left( r\right)
		i}\eta \right) $. We apply the inverse of this linear map to (\ref{dFi}) and
	(\ref{dFu}), obtaining 
	\begin{equation}
		\left( l^{\prime }\left( r\right) i,\theta ^{\prime }\left( r\right)
		i\right) \text{\ \ \ \ and\ \ \ \ }\frac{1}{r}\left( l\left(
		r\right) e^{-\theta \left( r\right) i}ze^{\theta \left( r\right) i},\sin
		\left( \theta \left( r\right) \right) ~e^{-\theta \left( r\right) i}z\right) 
		\text{,}  \label{tpa}
	\end{equation}%
	respectively. Since the metric is left invariant and $\left\langle
	ze^{\theta \left( r\right) i},z\right\rangle =\left\langle e^{\theta \left(
		r\right) i},1\right\rangle =\cos \left( \theta \left( r\right) \right) $, we
	have that 
	\begin{equation}
		\left\Vert d\varphi _{ri}\left( i\right) \right\Vert =\left\langle l^{\prime
		}\left( r\right) i,\theta ^{\prime }\left( r\right) i\right\rangle
		=l^{\prime }\left( r\right) \theta ^{\prime }\left( r\right) \text{,}
		\label{dphii}
	\end{equation}%
	\begin{eqnarray}
		\left\Vert d\varphi _{ri}\left( z\right) \right\Vert  &=&\frac{1}{r^{2}}%
		\left\langle l\left( r\right) e^{-\theta \left( r\right) i}ze^{\theta \left(
			r\right) i},\sin \left( \theta \left( r\right) \right) e^{-\theta \left(
			r\right) i}z\right\rangle   \notag \\
		&=&\frac{1}{r^{2}}~l\left( r\right) \sin \left( \theta \left( r\right)
		\right) \left\langle ze^{\theta \left( r\right) i},z\right\rangle 
		\label{dphiu} \\
		&=&\frac{1}{2r^{2}}~l\left( r\right) \sin \left( 2\theta \left( r\right)
		\right) \text{.}  \notag
	\end{eqnarray}%
	For $z\perp i$, $e^{si}z$ and $e^{-si}ze^{si}$ are orthogonal to $i$. Hence (%
	\ref{prmS3}) yields $d\varphi _{ri}\left( i\right) \perp 
	d\varphi_{ri}\left( z\right)$.
		
	Consequently, for $r>0$, $d\varphi _{ri}$ is injective and its image is
	space-like if and only if $\theta $, $\theta ^{\prime }$ and $l^{\prime }$
	evaluated at $r$ are as stated. Taking the limit for $r\rightarrow 0$, we
	have that $\left\langle d\varphi _{0}\left( i\right) ,d\varphi _{0}\left(
	z\right) \right\rangle =0$ and $\left\Vert d\varphi _{0}\left( i\right)
	\right\Vert =l^{\prime }\left( 0\right) \theta ^{\prime }\left( 0\right)
	=\left\Vert d\varphi _{0}\left( z\right) \right\Vert $. Thus, the last
	assertion follows.
\end{proof}

\bigskip

The lemma shows that $\theta ^{\prime }>0$ is necessary for the screw-radial
map to be space-like. So we can reparametrize and in the next theorem we
consider $\theta \left( r\right) =r/2$.

Let $\mathbb{R}^{3}\rtimes S^{3}$ be endowed with the left invariant split
pseudo-Riemannian metric defined in (\ref{prmS3}) and let $\ell :\mathbb{R}%
\rightarrow \mathbb{R}$ be as in (\ref{ell}). The following is the lifted
version of Theorem \ref{main}.

\begin{theorem}
	\label{mainLIft}The map 
	\begin{equation}
		\phi :\mathbb{R}^{3}\cong \operatorname{Im}\mathbb{H}\rightarrow \mathbb{R}%
		^{3}\rtimes S^{3}\text{,\ \ \ \ \ \ }\phi \left( ru\right) =\left( \ell
		\left( r\right) u,\exp \left( ru/2\right) \right) \text{,}  \label{phi}
	\end{equation}%
	with $\left\vert u\right\vert =1$ and $r\in \mathbb{R}$, is well defined,
	smooth and odd. Its restrictions to the open ball $\ \left\{ v\in \mathbb{R}%
	^{3}\mid \left\vert v\right\vert <\pi \right\} $ and to the open spherical
	shells 
	\begin{equation*}
		\left\{ v\in \mathbb{R}^{3}\mid 2k\pi <\left\vert v\right\vert <\left(
		2k+1\right) \pi \right\} \text{,}
	\end{equation*}%
	for $k\in \mathbb{N}$, determine homologically volume maximizing space-like
	submanifolds of $\mathbb{R}^{3}\rtimes S^{3}$.
\end{theorem}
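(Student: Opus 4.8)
The plan is to exhibit one split special Lagrangian calibration $3$-form $\psi$ on $\R^3\rtimes S^3$ that calibrates all the submanifolds in the statement simultaneously, and then to invoke Theorem \ref{tfc}. The elementary part comes first: $\phi$ is the screw-radial map $\varphi$ of Lemma \ref{space-like} with $l=\ell$ and $\theta(r)=r/2$, so it is well defined, smooth and odd by that lemma, and it is an injective immersion on each of the indicated regions (using that $\ell$ is strictly increasing on $(0,\infty)$, that $u\mapsto\exp(ru/2)$ is injective on the relevant range of $r$, and the parametrization by $ru$). For the space-like assertion I would check the three conditions of Lemma \ref{space-like}: $\theta'\equiv\tfrac12>0$; $\ell'>0$ on the relevant set, since $\ell^3=c^3(r-\sin r)$ gives $(\ell^3)'=c^3(1-\cos r)\ge0$ with zeros only on $2\pi\Z$, so $\ell$ is strictly increasing and $\ell'(0)>0$ because $\ell(r)\sim c\,6^{-1/3}r$ near $0$; and $k\pi<\theta(r)<k\pi+\tfrac\pi2$ is equivalent to $2k\pi<r<(2k+1)\pi$, which describes exactly the open ball of radius $\pi$ (case $k=0$, with $r=0$ covered by the last clause of the lemma) and the open shells.

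Next, let $e^1,e^2,e^3$ and $\epsilon^1,\epsilon^2,\epsilon^3$ be the left-invariant $1$-forms on $\R^3\rtimes S^3$ dual at $(0,1)$ to $(i,0),(j,0),(k,0)$ and $(0,i),(0,j),(0,k)$, and set
\[
\psi=\tfrac12\Bigl(C\,e^1\wedge e^2\wedge e^3+\tfrac1C\,\epsilon^1\wedge\epsilon^2\wedge\epsilon^3\Bigr)
\]
for a constant $C>0$ to be fixed. I would first check that $\psi$ is closed: from $[(x,\xi),(y,\eta)]=(2\xi\times y-2\eta\times x,\,2\xi\times\eta)$ and the Maurer--Cartan equations, $d\epsilon^a$ is a combination of the $\epsilon^b\wedge\epsilon^c$ while $de^a$ is a combination of the $e^b\wedge\epsilon^c$, and in either case wedging with the two remaining $\epsilon$'s (resp.\ $e$'s) forces a repeated factor, so $\epsilon^1\wedge\epsilon^2\wedge\epsilon^3$ and $e^1\wedge e^2\wedge e^3$ are both closed. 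Then $\psi$ is a calibration: after left-translating to $(0,1)$, the metric (\ref{prmS3}) is precisely the split inner product of $\R^{3,3}$ in Proposition \ref{warren} with $m=3$ and $\psi_{(0,1)}$ is precisely the $m$-form appearing there, so $\psi_q(\xi)\ge\operatorname{vol}_3(\xi)$ for every space-like $3$-vector $\xi$ with $\psi_q(\xi)>0$, with equality exactly when $\epsilon^1\wedge\epsilon^2\wedge\epsilon^3(\xi)=C^2\,e^1\wedge e^2\wedge e^3(\xi)$.

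It remains to see that $\phi$, restricted to the ball and the shells, is calibrated by $\psi$. By the equivariance of $\phi$ recorded in the proof of Lemma \ref{space-like} under the isometric $S^3$-action (\ref{action}) — which also preserves $\psi$, since $\mathrm{Ad}_{(0,q)}$ acts by a rotation of each $\R^3$-summand and so fixes the two volume forms — it suffices to verify the special Lagrangian condition along the $i$-axis, at $\phi(ri)$ for $r\ge0$. Left-translating the tangent space at $\phi(ri)$ to $(0,1)$ yields, by (\ref{tpa}) with $l=\ell$ and $\theta=r/2$, the $3$-plane $\Xi_r$ spanned by $(\ell'(r)i,\tfrac12 i)$ and $\tfrac1r\bigl(\ell(r)\,e^{-\theta i}ze^{\theta i},\ \sin(r/2)\,e^{-\theta i}z\bigr)$ for $z\in\{j,k\}$; since $z\mapsto e^{-\theta i}ze^{\theta i}$ and $z\mapsto e^{-\theta i}z$ are rotations of $\operatorname{span}\{j,k\}$, a determinant computation gives
\[
e^1\wedge e^2\wedge e^3(\Xi_r)=\frac{\ell'(r)\,\ell(r)^2}{r^2},\qquad
\epsilon^1\wedge\epsilon^2\wedge\epsilon^3(\Xi_r)=\frac{\sin^2(r/2)}{2r^2},
\]
with the limits at $r=0$ agreeing by continuity. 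Hence $\Xi_r$ is special Lagrangian precisely when $\tfrac12\sin^2(r/2)=C^2\ell'(r)\ell(r)^2=\tfrac{C^2}{3}(\ell^3)'(r)$, i.e. $(\ell^3)'(r)=\tfrac{3}{4C^2}(1-\cos r)$; the odd solution with $\ell(0)=0$ is $\ell^3(r)=\tfrac{3}{4C^2}(r-\sin r)$, which is exactly (\ref{ell}) when $C^2=3/(4c^3)$. Fixing this $C$ and orienting each region so that $e^1\wedge e^2\wedge e^3(\Xi_r)>0$ (hence $\psi_{\phi(ri)}(\Xi_r)>0$ as well), the restrictions of $\phi$ are calibrated by $\psi$, and Theorem \ref{tfc} gives that they are homologically volume maximizing.

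The genuinely substantive point — the one that forces the precise function $\ell$ — is that the calibration identity must hold with a single constant $C$ for all radii at once, which converts the pointwise special Lagrangian condition into the ordinary differential equation $(\ell^3)'=\tfrac{3}{4C^2}(1-\cos r)$, uniquely solved (among odd functions vanishing at $0$) by $\ell(r)=c(r-\sin r)^{1/3}$. Everything else — closedness of the two $3$-forms, the two determinants, and the identification of (\ref{prmS3}) with the $\R^{3,3}$ model of Proposition \ref{warren} — is routine once the equivariant reduction to the $i$-axis is in place; setting up that bookkeeping correctly (the left-translation to $(0,1)$, and the $\cos^2+\sin^2=1$ cancellations in the two determinants) is the only delicate step.
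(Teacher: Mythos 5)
Your proposal is correct and follows essentially the same route as the paper: Lemma \ref{space-like} for the space-like part, the left-invariant split special Lagrangian form of Proposition \ref{warren}, reduction to the $i$-axis by the rotational symmetry, and the same two determinants leading to the ODE $(\ell^3)'=\tfrac{3}{4C^2}(1-\cos r)$ with $4c^3C^2=3$. The only (harmless) deviation is that you prove closedness of the calibration form via the Maurer--Cartan equations, whereas the paper gets it for free by writing the form as $\tfrac12\bigl(C\pi_1^{\ast}\omega^1+\tfrac1C\pi_2^{\ast}\omega^2\bigr)$, a sum of pullbacks of closed volume forms.
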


\begin{proof}
	Using Lemma \ref{space-like}\ with $l=\ell $ and $\theta \left( r\right) =r/2
	$, one shows that those restrictions of $\phi $ are space-like submanifolds.
	Indeed, on the one hand, $\theta ^{\prime }=1/2>0$ and $2k\pi <r<\left(
	2k+1\right) \pi $ implies that $k\pi <\frac{r}{2}<k\pi +\frac{\pi }{2}$ (for 
	$k\in \mathbb{N}\cup \left\{ 0\right\} $); on the other hand, 
	\begin{equation}
		\ell ^{\prime }\left( r\right) =\frac{c}{3}\frac{1-\cos r}{\left( r-\sin
			r\right) ^{2/3}}  \label{lPrime}
	\end{equation}%
	is positive for $0<r\neq 2k\pi $ ($k\in \mathbb{N}$) and also $\ell ^{\prime
	}\left( 0\right) >0$. In particular, $\ell $ is strictly increasing and so $%
	\varphi $ is injective.
	
	Now we prove that the given restrictions of $\varphi $ are homologically
	volume maximizing. Let $\pi _{1}$ and $\pi _{2}$ be the canonical
	projections of $\mathbb{R}^{3}\rtimes S^{3}$ onto the first and second
	factor, and let $\omega ^{1}$ and $\omega ^{2}$ be the canonical volume
	forms of $\mathbb{R}^{3}$ and $S^{3}$, respectively. Define the $3$-form $%
	\omega $ on $\mathbb{R}^{3}\rtimes S^{3}$ by 
	\begin{equation}
		\omega =\frac{1}{2}\left( C\pi _{1}^{\ast }\omega ^{1}+\frac{1}{C}\,\pi
		_{2}^{\ast }\omega ^{2}\right) \text{,}  \label{omegaR3S3}
	\end{equation}%
	for some positive constant $C$ to be determined later. Let us see that $%
	\omega $ is a calibration. Clearly, $\omega $ is closed. We call $\left\{
	e^{1},e^{2},e^{3},\epsilon ^{1},\epsilon
	^{2},\epsilon ^{3}\right\} $ the dual basis of the juxtaposition of $\left\{ \left( i,0\right)
	,\left( j,0\right) ,\left( k,0\right) \right\} $ and $\left\{ \left(
	0,i\right) ,\left( 0,j\right) ,\left( 0,j\right) \right\} $
	(sometimes, we will abuse the notation omitting the zeros). We have that 
	\begin{equation*}
		\omega _{\left( 0,1\right) }=\frac{1}{2}\left( Ce^{1}\wedge e^{2}\wedge
		e^{3}+\frac{1}{C}\,\epsilon ^{1}\wedge \epsilon ^{2}\wedge \epsilon
		^{3}\right) \text{,}
	\end{equation*}%
	which is a calibration on $T_{\left( 0,1\right) }\left( \mathbb{R}%
	^{3}\rtimes S^{3}\right) =\mathbb{R}^{3}\times \mathbb{R}^{3}$ by
	Proposition \ref{warren}. Since both $\omega $ and the split metric are left
	invariant, $\omega $ is a calibration on $\mathbb{R}^{3}\rtimes S^{3}$.
	
	Next we verify that $\omega $ calibrates $\phi $. By the rotational symmetry
	we have already used in the proof of Lemma \ref{space-like}, if suffices to
	show that the image of $d\phi _{ri}$ is calibrated by $\omega _{\phi \left(
		ri\right) }$, or equivalently, by invariance, that $\left( L_{\phi \left(
		ri\right) }\right) ^{\ast }d\phi _{ri}\left( \mathbb{R}^{3}\right) $ is
	calibrated by $\omega _{\left( 0,1\right) }$. For any $r>0$ and any unit
	vector $z\perp i$ in $\operatorname{Im}\mathbb{H}$, by (\ref{tpa})\ with $l=\ell $
	and $\theta =r/2$, we have 
	\begin{eqnarray*}
		\left( L_{\phi \left( ri\right) }\right) ^{\ast }d\phi _{ri}\left( i\right) 
		&=&\left( \ell ^{\prime }\left( r\right) i,i/2\right)  \\
		\left( L_{\phi \left( ri\right) }\right) ^{\ast }d\phi _{ri}\left( z\right) 
		&=&\frac{1}{r}\left( \ell \left( r\right) e^{-ri/2}ze^{ri/2},\sin \left(
		r/2\right) ~e^{-ri/2}z\right) \text{.}
	\end{eqnarray*}%
	We compute 
	\begin{equation*}
		\begin{tabular}{l}
			$e^{1}\wedge e^{2}\wedge e^{3}\left( \ell ^{\prime }\left( r\right) i,\dfrac{%
				1}{r}\,\ell \left( r\right) e^{-ri/2}je^{ri/2},\dfrac{1}{r}\,\ell \left(
			r\right) e^{-ri/2}ke^{ri/2}\right) =$ \\ 
			\\ 
			$\ \ \ =\dfrac{1}{r^{2}}\,\ell \left( r\right) ^{2}\ell ^{\prime }\left(
			r\right) e^{1}\wedge e^{2}\wedge e^{3}\left( i,j,k\right) =\dfrac{1}{3r^{2}}\dfrac{d}{dr}\left( \ell \left( r\right)
			^{3}\right) \text{,}$ 
			\end{tabular}%
	\end{equation*}%
	since $i=e^{-ri/2}ie^{ri/2}$ and conjugation by $e^{-ri/2}$ preserves $%
	\omega _{0}^{1}$. Also, 
	\begin{equation*}
		\begin{tabular}{l}
			$\epsilon ^{1}\wedge \epsilon ^{2}\wedge \epsilon ^{3}\left( \dfrac{1}{2}\,i,%
			\dfrac{1}{r}\sin \left( r/2\right) ~e^{-ri/2}j,\dfrac{1}{r}\sin \left(
			r/2\right) ~e^{-ri/2}k\right) =$ \\ 
			\\ 
			$\ \ \ =\dfrac{\sin ^{2}\left( r/2\right) }{2r^{2}}~\epsilon ^{1}\wedge
			\epsilon ^{2}\wedge \epsilon ^{3}\left( i,j,k\right)=\dfrac{1-\cos r}{4r^{2}} $ 
		\end{tabular}%
	\end{equation*}%
	(on $i^{\bot }$, multiplication by $e^{is}$ is the rotation through the
	angle $s$). Then equation (\ref{EqWarren}) is satisfied in our case if and
	only if 
	\begin{equation*}
		\frac{1-\cos r}{4}=\frac{C^{2}}{3}\frac{d}{dr}\left( \ell \left( r\right)
		^{3}\right) 
	\end{equation*}%
	(the case $r=0$ is dealt with using continuity). Since $\ell \left( 0\right)
	=0$, this amounts to $\ell \left( r\right) ^{3}=\frac{3}{4C^{2}}\left(
	r-\sin r\right) $. So, we can choose $C>0$ with $4c^{3}C^{2}=3$ and the
	theorem follows.
\end{proof}

\medskip

Now, Theorem \ref{main} is a corollary of the previous theorem:

\begin{proof}[Proof of Theorem \protect\ref{main}]
	Let $\omega ^{1}$ and $\omega ^{2}$ be, as above, the canonical volume forms
	of $\mathbb{R}^{3}$ and $S^{3}$, respectively. Since odd dimensional real
	projective spaces are orientable, there exists a $3$-form $\varpi ^{2}$ on $%
	SO_{3}$ such that $I^{\ast }\varpi ^{2}=\omega ^{2}$.
	
	Let $\mathit{p}_{1}$, $\mathit{p}_{2}$ be the canonical projections of $%
	\mathbb{R}^{3}\times SO_{3}$ onto the first and second factors,
	respectively, and let $\Omega =\frac{1}{2}\left( C\mathit{p}_{1}^{\ast
	}\omega ^{1}+\frac{1}{C}~\mathit{p}_{2}^{\ast }\varpi ^{2}\right) $ on $%
	\mathbb{R}^{3}\times SO_{3}$. Then, $\Pi ^{\ast }\Omega =\omega $, where $%
	\omega $ is the calibration form in (\ref{omegaR3S3}), and thus $\Omega $ is
	a calibration. We also have that $\Pi \circ \phi =\Phi $, where $\phi $ is
	as in (\ref{phi}). Since $\Pi $ is a local isometry and $\phi $ is calibrated by 
	$\omega $, then $\Phi $ is calibrated by $\Omega $.
\end{proof}

\section{Geometric significance: Frame vorticity\label{GeomSig}}

The curl of vector fields on a three-dimensional Riemannian manifold is a
central concept in some areas of mathematics. We introduce below the
concept of vorticity of sections of orthonormal frame bundles. As a
motivation, we comment on the simplest screw-radial map from $\mathbb{R}^{3}$
to $\mathbb{R}^{3}\times SO_{3}$.

Let $SO\left( \mathbb{R}^{3}\right) =\mathbb{R}^{3}\times SO_{3}$ be the
positively oriented orthonormal frame bundle of $\mathbb{R}^{3}$. We
consider the section%
\begin{equation*}
	b_{0}:\mathbb{R}^{3}\rightarrow SO\left( \mathbb{R}^{3}\right) \text{,\ \ \
		\ \ \ \ }b_{0}\left( x\right) =\left( x,R_{x}\right) \text{,}
\end{equation*}%
where $R_{x}$ is the rotation through the angle $\left\vert x\right\vert $
around the line $\mathbb{R}x$, more precisely, $R_{0}=$ id and for $x\neq 0$%
, $R_{x}$ is the linear map defined by $R_{x}\left( x\right) =x$ and 
\begin{equation}
	R_{x}\left( y\right) =\cos \left(\left\vert x\right\vert\right) y+\sin \left(\left\vert
	x\right\vert\right) \left(x/{\left\vert x\right\vert }\right)\times y  \label{Rx}
\end{equation}%
for $y\perp x$. Notice that $\left. \frac{d}{dt}\right\vert _{0}R_{tx}=C_{x}$%
, with $C$ as in (\ref{Cruz}), and so the tangent space at $\left(
0,I_{3}\right) $ of the submanifold of $SO\left( \mathbb{R}^{3}\right) $
determined by $b_{0}$ is $\left\{ \left( x,C_{x}\right) \mid x\in \mathbb{R}%
^{3}\right\} $.

The section $b_{o}$ may be described informally as follows: Moving away from
the origin in one direction $x$ entails rotating through an angle $%
\left\vert x\right\vert $ around the oriented line determined by $x$. In
broad terms, we want to discern to what extent a local section of $SO\left( 
\mathbb{R}^{3}\right) \rightarrow \mathbb{R}^{3}$, at each point of $\mathbb{%
	R}^{3}$, resembles $b_{0}$ near the origin. For the sake of generality we
study the problem in a wider context.

Let $M$ be an oriented three-dimensional Riemannian manifold. For an
orthonormal set $\left\{ u,v\right\} $ in $T_{p}M$, let $u\times v$ be the
unique $w\in T_{p}M$ such that $\left\{ u,v,w\right\} $ is a positively
orthonormal basis. The bilinear extension gives a well defined cross product 
$\times $ on $T_{p}M$ depending smoothly on $p$. For $x\in T_{p}M$ we denote 
$C_{x}\left( y\right) =x\times y$, which defines a skew symmetric operator
on $T_{p}M$. Also, all such operators have that form.

Fix a point $o$ in $M$ and let $SO\left( M\right) $ be the bundle of
positively oriented orthonormal frames of $M$, that is,%
\begin{equation*}
	SO\left( M\right) =\left\{ b:T_{o}M\rightarrow T_{p}M\mid b\text{ is a
		direct linear isometry, }p\in M\right\} \text{.}
\end{equation*}%
This is a principal fiber bundle over $M$ with typical fiber $SO_{3}$. The
definition differs unessentially from the usual one, with $\mathbb{R}^{3}$
instead of $T_{o}M$. On the one hand, this will allow us to think of the
elements of $SO\left( M\right) $ as positions of a body in $M$ with
reference state at $o$. On the other hand, when $M$ is a space form, this
will induce a simple identification between $SO\left( M\right) $ and the
group of direct isometries of $M$.

Let $b$ be a smooth section of $SO\left( M\right) \rightarrow M$. If $x\in
T_{p}M$, then $\nabla _{x}b:T_{o}M\rightarrow T_{p}M$ is well defined, as
usual, by%
\begin{equation*}
	\left( \nabla _{x}b\right) \left( y\right) =\left. \frac{D}{dt}\right\vert
	_{0}b\left( \alpha \left( t\right) \right) \left( y\right) \text{,}
\end{equation*}%
where $\alpha $ is any curve in $M$ with $\alpha \left( 0\right) =p$ and $%
\alpha ^{\prime }\left( 0\right) =x$.

	\begin{proposition}
	\label{prompt}Let $b$ be a smooth section of $SO\left( M\right) \rightarrow
	M $. For each vector field $X$ on $M$ there exists a unique vector field $%
	X^{b} $ on $M$ such that 
	$\nabla _{X}b=C_{X^{b}}\circ b$.
	\end{proposition}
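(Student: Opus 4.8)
The plan is to argue pointwise and then invoke the structure fact recalled just before the proposition: every skew\nobreakdash-symmetric operator on $T_pM$ equals $C_w$ for a unique $w\in T_pM$. Fix $p\in M$ and $x\in T_pM$, and choose a curve $\alpha$ in $M$ with $\alpha(0)=p$, $\alpha'(0)=x$. Since $b$ is a section, $b(\alpha(t)):T_oM\to T_\alpha(t)M$ is a direct linear isometry for every $t$, so $\langle b(\alpha(t))(y),b(\alpha(t))(z)\rangle=\langle y,z\rangle$ for all $y,z\in T_oM$. Differentiating this identity at $t=0$, using metric compatibility of $\nabla$ and the definition of $\nabla_x b$, gives
\[
\langle (\nabla_x b)(y),\,b(p)(z)\rangle+\langle b(p)(y),\,(\nabla_x b)(z)\rangle=0 .
\]
Because $b(p)$ is onto, this says exactly that the operator $A_x:=(\nabla_x b)\circ b(p)^{-1}\colon T_pM\to T_pM$ is skew\nobreakdash-symmetric.

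By the cited fact there is a unique $w\in T_pM$ with $A_x=C_w$; set $w=:\beta_b(x)$, so that $\nabla_x b=C_{\beta_b(x)}\circ b(p)$, and $\beta_b(x)$ is the only vector of $T_pM$ with this property, by injectivity of $w\mapsto C_w$. Since $x\mapsto\nabla_x b$ is linear and $b(p)^{-1}$ and the inverse of $w\mapsto C_w$ are linear, $\beta_b\colon T_pM\to T_pM$ is linear; carrying this out at every point produces a bundle endomorphism $\beta_b\colon TM\to TM$. For a vector field $X$ we then define $X^b:=\beta_b\circ X$, i.e.\ $X^b(p)=\beta_b(X(p))$. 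Existence and the required identity $\nabla_X b=C_{X^b}\circ b$ hold by construction, and uniqueness of $X^b$ is immediate from the pointwise uniqueness of $w$. (One also sees at once that $(fX)^b=fX^b$ for $f\in\mathcal{C}^\infty(M)$, since $\nabla_{fX}b=f\nabla_X b$.)

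It remains to check smoothness, which is routine: $b$ is $\mathcal{C}^\infty$, so in a local orthonormal frame $(q,x)\mapsto\nabla_x b$ is expressed through derivatives of the components of $b$ and is smooth; $q\mapsto b(q)^{-1}$ is smooth; and the bundle isomorphism $TM\to\mathfrak{so}(TM)$, $w\mapsto C_w$, is smooth with smooth inverse. Composing these, $\beta_b$ is a smooth bundle map, hence $X^b=\beta_b\circ X$ is a smooth vector field whenever $X$ is. I do not expect a genuine obstacle here; the only step needing a little care is differentiating the isometry relation so that the skew\nobreakdash-symmetry of $A_x$ falls out cleanly, after which the proposition follows from the stated classification of skew operators together with standard smoothness bookkeeping.
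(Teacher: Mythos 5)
Your proof is correct and follows essentially the same route as the paper's: fix a point and a curve, differentiate the isometry condition to see that $(\nabla_x b)\circ b(p)^{-1}$ is skew-symmetric, and then apply the classification of skew-symmetric operators on an oriented three-dimensional inner product space as $C_w$ for a unique $w$. The only (immaterial) difference is that the paper differentiates just the unit-norm condition $\vert b(\alpha(t))(y)\vert=1$ and reads off skew-symmetry from the vanishing of the quadratic form, while you differentiate the full polarized identity; your added remarks on linearity and smoothness are routine and consistent with what the paper leaves implicit.
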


The map $X\mapsto X^{b}$ determines a $\left( 1,1\right) $-tensor field on $%
M $ prompting the following definition ($T^1 M$ denotes the unit tangent bundle of $M$).

\begin{definition} The \textbf{vorticity} of a smooth section $b$ of $SO\left( M\right) \rightarrow M$ is 
	the function
	\begin{equation*}
		h^b:T^1M \rightarrow \mathbb R  \text{, \ \ \ \ \ } h^b\left(x\right)=\left\langle
		X^{b},X\right\rangle\text{.}
		\end{equation*}%
\end{definition}

Positive vorticity of $b$ means, informally, that at each point of $M$, when moving according to $%
b $ in any direction, this direction forms an angle smaller than $\pi /2$
with the axis of rotation. 

\begin{proof}[Proof of Proposition \protect\ref{prompt}]
	Let $p\in M$ and let $\alpha $ be a smooth curve on $M$ such that $\alpha
	\left( 0\right) =p$ and $\alpha ^{\prime }\left( 0\right) =X_{p}$. Given $%
	y\in T_{o}M$, $t\mapsto b\left( \alpha \left( t\right) \right) \left(
	y\right) $ is a unit vector field along $\alpha $ taking the value $b\left(
	p\right) \left( y\right) $ at $t=0$. Hence, 
	\begin{equation*}
		\left( \nabla _{X_{p}}b\right) \left( y\right) =\left. \frac{D}{dt}%
		\right\vert _{0}b\left( \alpha \left( t\right) \right) \left( y\right) \perp
		b\left( p\right) \left( y\right) \text{.}
	\end{equation*}%
	Putting $z=b\left( p\right) y$, we have that $\left\langle \left( \nabla
	_{X_{p}}b\right) \left( b\left( p\right) ^{-1}z\right) ,z\right\rangle $ for
	all $z\in T_{p}M$. Then, $\left( \nabla _{X_{p}}b\right) \circ b\left(
	p\right) ^{-1}$ is a skew symmetric operator on $T_{p}M$ and so it can be
	realized as $C_{w}$ for a unique $w\in T_{p}M$, which we call $X_{p}^{b}$.
\end{proof}

\bigskip

Now we turn our attention to the three-dimensional space form $M_{\kappa }$
of constant sectional curvature $\kappa =0,1$ or $-1$, that is, $M_{0}=%
\mathbb{R}^{3}$, $M_{1}$ is the sphere $S^{3}$ and $M_{-1}$ is hyperbolic
space $H^{3}$.

Let $\left\{ e_{0},e_{1},e_{2},e_{3}\right\} $ be the canonical basis of $%
\mathbb{R}^{4}$ and consider the inner product given by $\left\langle
x,y\right\rangle _{\kappa }=\kappa
x_{0}y_{0}+x_{1}y_{1}+x_{2}y_{2}+x_{3}y_{3}$. For $\kappa =\pm 1$, $%
M_{\kappa }$ is the connected component of $e_{0}$ of $\left\{ x\in \mathbb{R%
}^{4}\mid \left\langle x,x\right\rangle _{\kappa }=\kappa \right\} $, with
the induced (Riemannian) metric. To handle the three cases simultaneously,
sometimes it will be convenient to identify $\mathbb{R}^{3}\cong \left\{
x\in \mathbb{R}^{4}\mid x_{0}=1\right\} $. For any $p\in M_{\kappa }$, the
orientation on $T_{p}M_{\kappa }$ is given, as usual, by declaring an
oriented basis $\left\{ u,v,w\right\} $ positive if $\left\{ p,u,v,w\right\} 
$ is a positive basis of $\mathbb{R}^{4}$. In particular, $e_{1}\times
e_{2}=e_{3}$ at $e_{0}\in M_{\kappa }$.

We denote by $G_{\kappa }=$ Iso$_{o}\left( M_{\kappa }\right) $, the group of
direct isometries of $M_{\kappa }$. For $\kappa =1,-1$, it coincides with
the identity component of the automorphism group of the inner product $%
\left\langle ,\right\rangle _{1}$ or $\left\langle ,\right\rangle _{-1}$
(that is, $SO_{4}$ or $O_{o}\left( 1,3\right) $), respectively. With the
identification $\mathbb{R}^{3}\cong e_{0}+\mathbb{R}^{3}$, we have%
\begin{equation*}
	G_{0}=\left\{ \left( 
	\begin{array}{cc}
		1 & 0 \\ 
		a & A%
	\end{array}
	\right) \mid a\in \mathbb{R}^{3}\text{, }A\in SO_{3}\right\} \cong \mathbb{R}
	^{3}\rtimes SO_{3}\text{.}
\end{equation*}%
The Lie algebra of $G_{\kappa }$ is $\mathfrak{g}_{\kappa }=\left\{
Z_{\kappa }\left( x,\xi \right) \mid x,\xi \in \mathbb{R}^{3}\right\} $,
where%
\begin{equation*}
	Z_{\kappa }\left( x,\xi \right) =\left( 
	\begin{array}{cc}
		0 & -\kappa x^{T} \\ 
		x & C_{\xi }%
	\end{array}
	\right)
\end{equation*}%
($x$ is a column vector and $T$ denotes transpose). One has that $\mathfrak{g%
}_{1}=\mathfrak{so}\left( 4\right) $ and $\mathfrak{g}_{-1}=\mathfrak{o}%
\left( 1,3\right) $. We take $o=e_{0}$ and denote by $K_{\kappa }$ the
isotropy subgroup at this point and by $\mathfrak{k}_{\kappa }$ the Lie
algebra of $K_{\kappa }$.

Let $\mathfrak{g}_{\kappa }=\mathfrak{p}_{\kappa }\oplus \mathfrak{k}%
_{\kappa }$ be the Cartan decomposition of $\mathfrak{g}_{\kappa }$
associated with the point $o$, that is, $\mathfrak{p}_{\kappa }$ and $%
\mathfrak{k}_{\kappa }$ consist of the matrices $Z_{\kappa }\left(
x,0\right) $ and $Z_{\kappa }\left( 0,\xi \right) $, with $x,\xi \in 
\mathbb{R}^{3}$, respectively. The corresponding Cartan decomposition of $%
G_{\kappa }$ is given by $G_{\kappa }=\exp \left( \mathfrak{p}_{\kappa
}\right) K_{\kappa }$. Let $\pi :G_{\kappa }\rightarrow M_{\kappa }$, $\pi
\left( g\right) =g\left( o\right) .$

We consider on $G_{\kappa }$ the left invariant pseudo-Riemannian structure
given at the identity by%
\begin{equation}
	\left\langle Z_{\kappa }\left( x,\xi \right) ,Z_{\kappa }\left( y,\eta
	\right) \right\rangle =\tfrac{1}{4}\left( \left\langle x,\eta \right\rangle
	+\left\langle y,\xi \right\rangle \right) \text{,}  \label{33}
\end{equation}%
which in the Euclidean case coincides with that in (\ref{metricKappaCero}).
It has signature $\left( 3,3\right) $ and is bi-invariant. Indeed, since $%
G_{\kappa }$ is connected, it suffices to show that ad$_{Z}$ is skew
symmetric for all $Z\in \mathfrak{g}_{\kappa }$. This follows from this
expression for the Lie bracket:%
\begin{equation}
	\left[ Z_{\kappa }\left( x,\xi \right) ,Z_{\kappa }\left( y,\eta \right) %
	\right] =Z_{\kappa }\left( C_{\xi }y-C_{\eta }x,\kappa x\times y+\xi \times
	\eta \right) \text{,}  \label{corZZ}
\end{equation}%
which can be checked using the identities $yx^{T}-xy^{T}=C_{x\times y}=\left[
C_{x},C_{x}\right] $. Note that for $\kappa =\pm 1$, this metric is not the
one corresponding to the Killing form on the simple Lie algebra $\mathfrak{g}%
_{\kappa }$; the Killing form is not the only nondegenerate bi-invariant
form ($\mathfrak{g}_{1}$ is not simple and $\mathfrak{g}_{-1}$ is not absolutely simple, since it is the
complexification of $\mathfrak{o}\left(
1,2\right) $).

\medskip

The group $G_{\kappa }$ acts simply transitively on $SO\left( M_{k}\right) $%
. The action induces the bijection%
\begin{equation}
	\mathcal{I}:G_{\kappa }\rightarrow SO\left( M_{k}\right) \text{,\ \ \ \ \ } 
	\mathcal{I}\left( g\right) =\left( dg\right) _{o}\text{.}  \label{identif}
\end{equation}%
For $g,h\in G_{\kappa }$ we have $\left( dg\right) \left( \mathcal{I}\left(
h\right) \right) =\left( dg\right) \circ \left( dh\right) _{o}=d\left(
g\circ h\right) _{o}=d\left( L_{g}\left( h\right) \right) _{o}=\mathcal{I}%
\left( L_{g}\left( h\right) \right) $, where $L_{g}:G_{\kappa }\rightarrow
G_{\kappa }$ denotes left multiplication by $g$. Hence,%
\begin{equation}
	dg=\mathcal{L}_{g}=_{\text{def}}\mathcal{I}\circ L_{g}\circ \mathcal{I}
	^{-1}:SO\left( M_{\kappa }\right) \rightarrow SO\left( M_{\kappa }\right) 
	\text{.}  \label{dgLg}
\end{equation}

We consider on $SO\left( M_{\kappa }\right) $ the pseudo-Riemannian metric
induced from that on $G_{\kappa }$ in (\ref{33}) by the identification $%
\mathcal{I}$ in (\ref{identif}).

The next proposition provides a geometrical meaning of the pseudo-Riemannian
metric (\ref{metricKappaCero}) on $\mathbb{R}^{3}\times SO_{3}$ considered
in the introduction and moreover for the metric (\ref{33}) on $G_{\kappa }$
above. Theorem \ref{main} thereby acquires significance in relation to
positive frame vorticity.

	\begin{proposition}
	\label{helicitySL}Let $U$ be an open subset of $M_{\kappa }$. A section $%
	b:U\rightarrow SO\left( U\right) $ has positive  vorticity if and only if it
	determines a space-like submanifold of $SO\left( M_{\kappa }\right) $.
	\end{proposition}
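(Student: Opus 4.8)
The plan is to translate both conditions---positive vorticity of $b$ and space-likeness of the image of $b$ as a submanifold of $SO\left(M_\kappa\right)$---into the same pointwise linear-algebra inequality on $T_pM_\kappa$, and then observe they coincide. First I would fix $p\in U$, set $g=\mathcal{I}^{-1}\left(b\left(p\right)\right)\in G_\kappa$, and use the identification $\mathcal{I}$ together with (\ref{dgLg}) to compute $d b_p$ in terms of the Lie algebra $\mathfrak{g}_\kappa$. Concretely, for $x\in T_pM_\kappa$ and a curve $\alpha$ realizing $x$, the section $b$ along $\alpha$ can be written as $b\left(\alpha\left(t\right)\right)=g\cdot h\left(t\right)$ for a curve $h$ in $G_\kappa$ with $h\left(0\right)=e$; writing $h'\left(0\right)=Z_\kappa\left(x',\xi'\right)$, the condition that $\pi\circ b=\mathrm{id}$ forces the $\mathfrak{p}_\kappa$-component to match the horizontal lift of $x$, while $\xi'$ is exactly the rotational part, and one checks that $\left(\nabla_x b\right)\circ b\left(p\right)^{-1}=C_{g\left(\xi'\right)}$ after transporting back to $T_pM_\kappa$. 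Hence $X^b_p$ corresponds, under $g$, to the vector $\xi'$, and the tangent space to the submanifold at $b\left(p\right)$ is spanned by the $dL_g$-images of the vectors $Z_\kappa\left(x',\xi'\left(x'\right)\right)$ as $x'$ ranges over $\mathbb{R}^3\cong T_oM_\kappa$, where $x'\mapsto \xi'\left(x'\right)$ is the linear map representing the $\left(1,1\right)$-tensor $X\mapsto X^b$ in the frame $b\left(p\right)$.

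Next I would feed this into the bi-invariant metric (\ref{33}). Since the metric is left invariant and $dL_g$ is an isometry, the induced inner product on the tangent space of the submanifold, pulled back to $\mathbb{R}^3$ via $x'\mapsto Z_\kappa\left(x',\xi'\left(x'\right)\right)$, is the bilinear form
\begin{equation*}
	Q\left(x',y'\right)=\tfrac14\left(\left\langle x',\xi'\left(y'\right)\right\rangle+\left\langle y',\xi'\left(x'\right)\right\rangle\right)=\tfrac14\left\langle x',\left(A+A^T\right)y'\right\rangle,
\end{equation*}
where $A$ is the matrix of $x'\mapsto\xi'\left(x'\right)$. Crucially, the vorticity function is $h^b\left(x\right)=\left\langle X^b,X\right\rangle=\left\langle \xi'\left(x'\right),x'\right\rangle$ in this frame (using that $g$ is an isometry of the metrics on $T_pM_\kappa$ and $T_oM_\kappa$), i.e. $h^b$ is the quadratic form $x'\mapsto\left\langle x',Ax'\right\rangle=\tfrac12\left\langle x',\left(A+A^T\right)x'\right\rangle$. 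Therefore $h^b>0$ on all unit vectors precisely when $A+A^T$ is positive definite, which is precisely when $Q$ is positive definite, which is precisely the statement that $db_p\left(T_pM_\kappa\right)$ is a space-like subspace. Running this for every $p\in U$ gives the equivalence.

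The main obstacle I expect is the bookkeeping in the first step: correctly identifying that the rotational ($\mathfrak{k}_\kappa$) part of $h'\left(0\right)$ is exactly what the covariant derivative $\nabla_x b$ sees, i.e. verifying the formula $\left(\nabla_x b\right)\circ b\left(p\right)^{-1}=C_{g\left(\xi'\right)}$ in all three curvatures $\kappa$ simultaneously. This requires being careful that the metric on $SO\left(M_\kappa\right)$ induced by $\mathcal{I}$ and the Levi-Civita connection on $M_\kappa$ are compatible in the way implicitly used here---concretely, that differentiating the frame $b\left(\alpha\left(t\right)\right)$ covariantly along $\alpha$ corresponds, under $\mathcal{I}^{-1}$, to projecting $h'\left(0\right)$ onto $\mathfrak{k}_\kappa$ relative to the Cartan decomposition $\mathfrak{g}_\kappa=\mathfrak{p}_\kappa\oplus\mathfrak{k}_\kappa$. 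Once that dictionary is in place, everything else is the linear-algebra identity "$\left\langle x,Ax\right\rangle>0$ for all $x\neq0$ iff $A+A^T>0$", applied fiberwise, together with the left-invariance of the metric (\ref{33}) which lets us do the computation at the identity of $G_\kappa$.
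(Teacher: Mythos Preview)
Your proposal is correct and follows essentially the same approach as the paper: reduce to the identity via left translation by $g=\mathcal{I}^{-1}\left(b\left(p\right)\right)$, identify the tangent to the section with $Z_\kappa\left(x,x^{b}\right)$, and read off $\left\Vert db_p\left(y\right)\right\Vert=\tfrac{1}{2}\left\langle y,y^{b}\right\rangle$ from the bi-invariant metric. The ``bookkeeping'' obstacle you flag---that the $\mathfrak{k}_\kappa$-component of $h'\left(0\right)$ is exactly what $\nabla_x b$ detects---is precisely the content of the paper's preparatory Lemma~\ref{LemaZ}, whose proof uses that $\exp\left(tZ_\kappa\left(x,0\right)\right)$ realizes parallel transport along the geodesic through $o$ with initial velocity $x$.
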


Before proving the proposition, we state the following lemma. We call $\iota
=$ id$_{T_{o}M_{\kappa }}$ and identify $T_{o}M_{\kappa }=e_{0}^{\bot }\cong 
\mathbb{R}^{3}$. For simplicity, in the proofs we sometimes omit the
subindex $\kappa $.

\begin{lemma}
	\label{LemaZ}Let $b:U\rightarrow SO\left( M_{\kappa }\right) $ be a local
	section of $SO\left( M_{\kappa }\right) \rightarrow M_{\kappa }$.
	
	\smallskip
	
	\emph{a)} If $o\in U$, $b\left( o\right) =\iota $ and $x\in T_{o}M_{\kappa }$, then 
	$\left( db\right) _{o}\left( x\right) =d\mathcal{I}_{I_{6}}\left( Z_{\kappa
	}\left( x,x^{b}\right) \right)$. 
	
	\smallskip
	
	\emph{b)} Let $p\in U$ and suppose that $b\left( p\right) =\mathcal{I}\left(
	g\right) $, with $g\in G$ (in particular, $g\left( o\right) =p$). Let $\bar{b%
	}=\left( dg\right) ^{-1}\circ b\circ g$, which is a section from $%
	g^{-1}\left( U\right) $ to $SO\left( M_{\kappa }\right) $ with $\bar{b}%
	\left( o\right) =\iota $. Then%
	\begin{equation}
		\left( db\right) _{p}\left( y\right) =\left( d\mathcal{L}_{g}\right) _{\iota
		}\left( d\bar{b}_{o}\left( x\right) \right) \text{\ \ \ \ \ \ \ \ and\ \ \ \
			\ \ }y^{b}=\left( dg\right) _{o}\left( x^{\bar{b}}\right)   \label{dbyb}
	\end{equation}%
	for $x\in T_{o}M_{\kappa }$ and $y=dg_{o}\left( x\right) $.
\end{lemma}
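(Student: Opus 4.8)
The plan is to reduce both parts to the matrix realization of $G_\kappa$ as a closed subgroup of $GL_4(\mathbb R)$ acting linearly on $\mathbb R^4$ and preserving $M_\kappa$. In that picture $o=e_0$, the frame $\mathcal I(g)=(dg)_o$ sends $y\in T_oM_\kappa=e_0^{\bot}$ to the matrix--vector product $g\cdot y\in T_{g(o)}M_\kappa$, and the Levi--Civita connection of $M_\kappa$ is the tangential component of the flat connection of $(\mathbb R^4,\langle\,,\,\rangle_\kappa)$ (the affine slice for $\kappa=0$, the sphere for $\kappa=1$, the hyperboloid for $\kappa=-1$). Write $e$ for the identity of $G_\kappa$ and $\tilde b=\mathcal I^{-1}\circ b\colon U\to G_\kappa$; since $\tilde b(o)=e$ and $(db)_o=d\mathcal I_{e}\circ(d\tilde b)_o$, part (a) is equivalent to the identity $(d\tilde b)_o(x)=Z_\kappa(x,x^b)$ in $\mathfrak g_\kappa$.

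To prove that identity, observe first that $b$ being a section means $\pi\circ\tilde b=\mathrm{id}_U$, where $\pi(g)=g(o)$; differentiating at $o$ and using $d\pi_e\bigl(Z_\kappa(v,\xi)\bigr)=Z_\kappa(v,\xi)\,e_0=(0,v)\equiv v$ shows that the $\mathfrak p_\kappa$-component of $(d\tilde b)_o(x)$ is exactly $x$, so $(d\tilde b)_o(x)=Z_\kappa(x,\xi(x))$ for some linear map $x\mapsto\xi(x)$. It remains to identify $\xi(x)$ with $x^b$, which I would do by computing $\nabla_xb$ directly: choosing a curve $\alpha$ in $U$ with $\alpha(0)=o$, $\alpha'(0)=x$, the vector field $t\mapsto b(\alpha(t))(y)=\tilde b(\alpha(t))\cdot y$ is a curve in $\mathbb R^4$ lying in $T_{\alpha(t)}M_\kappa$, with ambient derivative at $0$ equal to $(d\tilde b)_o(x)\cdot y=Z_\kappa(x,\xi(x))\cdot y=\bigl(-\kappa\langle x,y\rangle,\ \xi(x)\times y\bigr)$; its tangential ($e_0^{\bot}$) component is $\xi(x)\times y$, so $(\nabla_xb)(y)=C_{\xi(x)}(y)$. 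Comparing with $\nabla_xb=C_{x^b}\circ b$ from Proposition \ref{prompt} and $b(o)=\iota$ gives $C_{\xi(x)}=C_{x^b}$, hence $\xi(x)=x^b$, proving (a).

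For part (b), the fact that $\bar b$ is a smooth section with $\bar b(o)=\iota$ is immediate: $\bar b(q)=(dg_q)^{-1}\circ b(g(q))$ is a direct linear isometry $T_oM_\kappa\to T_qM_\kappa$, and $\bar b(o)=(dg_o)^{-1}\circ b(p)=(dg_o)^{-1}\circ\mathcal I(g)=\iota$. Inverting the definition gives $b=dg\circ\bar b\circ g^{-1}$ on $U$; applying $\mathcal I^{-1}$ together with the identity $dg=\mathcal L_g=\mathcal I\circ L_g\circ\mathcal I^{-1}$ of (\ref{dgLg}) turns this into $\tilde b=L_g\circ\widetilde{\bar b}\circ g^{-1}$. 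Now differentiate at $p$ in the direction $y=dg_o(x)$, using $g^{-1}(p)=o$, $\widetilde{\bar b}(o)=e$ and $(dg^{-1})_p(y)=x$, then push forward by $\mathcal I$ and regroup the resulting composition as $(d\mathcal I)_g\circ(dL_g)_e\circ(d\mathcal I_e)^{-1}$, which equals $(d\mathcal L_g)_\iota$ because $\mathcal I(e)=\iota$ and $\mathcal L_g(\iota)=\mathcal I(g)$; this yields the first identity in (\ref{dbyb}). For the second identity I would use that the isometry $g^{-1}$ commutes with covariant differentiation: with $\beta=g^{-1}\circ\alpha$ one has $\bar b(\beta(t))(w)=(dg^{-1})\bigl(b(\alpha(t))(w)\bigr)$, whence $(\nabla_x\bar b)(w)=(dg_o)^{-1}\bigl((\nabla_yb)(w)\bigr)=(dg_o)^{-1}\circ C_{y^b}\circ b(p)(w)=(dg_o)^{-1}\circ C_{y^b}\circ(dg_o)(w)$; since $dg_o$ is a direct linear isometry, $(dg_o)^{-1}\circ C_v\circ(dg_o)=C_{(dg_o)^{-1}v}$, and comparing with $(\nabla_x\bar b)(w)=C_{x^{\bar b}}(w)$ (from $\bar b(o)=\iota$) gives $x^{\bar b}=(dg_o)^{-1}(y^b)$, i.e. $y^b=dg_o(x^{\bar b})$.

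The step I expect to be the main obstacle is the computation of $\nabla_xb$ in part (a), carried out uniformly over $\kappa\in\{0,1,-1\}$: one must set up the $\mathbb R^4$ model so that $Z_\kappa(x,x^b)$ acts on tangent vectors as above and the Levi--Civita connection appears as the tangential projection of the ambient flat connection, including the case $\kappa=0$ where the ambient form is degenerate. Granting this, the rest of part (a) and all of part (b) are formal chain-rule manipulations together with the naturality of the Levi--Civita connection and of the cross product under direct isometries.
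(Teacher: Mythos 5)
Your proof is correct. For part (b) you follow essentially the same route as the paper: the first identity is the chain rule applied to $b\circ g = dg\circ \bar b$ together with $dg=\mathcal L_g$ from (\ref{dgLg}), and the second identity comes from the naturality of the covariant derivative and of the cross product under the direct isometry $g$. For part (a), however, your argument is genuinely different. The paper lifts $t\mapsto b(\gamma(t))$ along the geodesic $\gamma(t)=\exp(tZ_\kappa(x,0))(o)$ to a curve $g(t)$ in $G_\kappa$, takes its smooth Cartan factorization $g(t)=\exp(X(t))k(t)$, identifies $X'(0)=Z_\kappa(x,0)$ via the local diffeomorphism $\left.\pi\circ\exp\right|_{\mathfrak p_\kappa}$, and identifies $k'(0)=Z_\kappa(0,x^b)$ using the symmetric-space fact that the transvections $(d\exp(tZ_\kappa(x,0)))_o$ realize parallel transport along $\gamma$. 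You instead stay entirely in the matrix model: applying $d\pi_e$ to the section condition $\pi\circ\tilde b=\mathrm{id}$ pins the $\mathfrak p_\kappa$-component of $(d\tilde b)_o(x)$ to be $Z_\kappa(x,0)$, and the $\mathfrak k_\kappa$-component is matched with $Z_\kappa(0,x^b)$ via the Gauss formula, i.e.\ the fact that the Levi--Civita connection of $M_{\pm 1}\subset(\mathbb R^4,\langle\,,\rangle_{\pm 1})$ is the tangential part of the ambient flat derivative (trivially so for $\kappa=0$). The step you flag as delicate does check out: $Z_\kappa(x,\xi)\cdot(0,y)=(-\kappa\langle x,y\rangle,\ \xi\times y)$ has tangential part $\xi\times y$, and $\mathcal I(h)(y)=h\cdot y$ holds in all three models, including the degenerate $\kappa=0$ case where the curve of tangent vectors stays in the hyperplane $\{x_0=0\}$. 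Your route trades the parallel-transport-by-transvections input and the smooth Cartan factorization for the more elementary extrinsic Gauss formula, and it works with an arbitrary curve $\alpha$ rather than only the geodesic; both treatments handle the three values of $\kappa$ uniformly.
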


\begin{proof}
	We call $\sigma \left( t\right) =\exp \left( tZ\left( x,0\right) \right) $,
	a curve in $G$. Let $\gamma $ be the geodesic in $M$ with $\gamma \left(
	0\right) =o=e_{o}$ and $\gamma ^{\prime }\left( 0\right) =x$. It is well
	known that $\gamma \left( t\right) =\pi \left( \sigma \left( t\right)
	\right) =\sigma \left( t\right) \left( o\right) $. By (\ref{identif}), 
	\begin{equation*}
		b\left( \gamma \left( t\right) \right) =\mathcal{I}\left( g\left( t\right)
		\right)
	\end{equation*}%
	for some curve $g$ in $G$. Hence $\left( db\right) _{o}\left( x\right) =d%
	\mathcal{I}_{I_{6}}\left( g^{\prime }\left( 0\right) \right) $.
	
	Suppose that the Cartan factorization of $g\left( t\right) $ is given by $%
	g\left( t\right) =\exp \left( X\left( t\right) \right) k\left( t\right) $,
	with $X\left( t\right) \in \mathfrak{p}$ and $k\left( t\right) \in K$.
	Evaluating at $t=0$ we get $g\left( 0\right) =I_{6}$ and so $X\left(
	0\right) =0$ and $k\left( 0\right) =I_{6}$.
	
	Differentiating $g$ at $t=0$ we obtain that $g^{\prime }\left( 0\right)
	=X^{\prime }\left( 0\right) +k^{\prime }\left( 0\right) $, which gives the
	Cartan decomposition of $g^{\prime }\left( 0\right) $. Hence, it suffices to
	show that 
	\begin{equation}
		X^{\prime }\left( 0\right) =Z\left( x,0\right) \text{\ \ \ \ \ \ \ and\ \ \
			\ \ \ \ \ }k^{\prime }\left( 0\right) =Z\left( 0,x^{b}\right) \text{.}
		\label{XprimaKprima}
	\end{equation}
	
	We have $\pi \left( \exp \left( X\left( t\right) \right) \right) =\pi \left(
	g\left( t\right) \right) =\gamma \left( t\right) $. Since $\left. \pi \circ
	\exp \right\vert _{\mathfrak{p}}:\mathfrak{p}\rightarrow M$ is a local
	diffeomorphism near $0\in \mathfrak{p}$, we have that $X\left( t\right)
	=tZ\left( x,0\right) $ and so the first identity in (\ref{XprimaKprima})
	holds. Also, 
	\begin{equation*}
		b\left( \gamma \left( t\right) \right) =\mathcal{I}\left( g\left( t\right)
		\right) =\mathcal{I}\left( \sigma \left( t\right) k\left( t\right) \right)
		=\left( d\sigma \left( t\right) \right) _{o}\left( dk\left( t\right) \right)
		_{o}\text{.}
	\end{equation*}
	
	It is well known that $\left( d\sigma \left( t\right) \right) _{o}$ realizes
	the parallel transport along $\gamma $ between $0$ and $t$. Thus, we have 
	\begin{equation*}
		C_{x^{b}}\left( v\right) =\left( \nabla _{x}b\right) \left( v\right) =\left. 
		\frac{D}{dt}\right\vert _{0}b\left( \gamma \left( t\right) \right) \left(
		v\right) =\left. \frac{D}{dt}\right\vert _{0}\left( dk\left( t\right)
		\right) _{o}\left( v\right) =k^{\prime }\left( 0\right) \left( v\right)
	\end{equation*}%
	for all $v\in T_{o}M\cong \mathbb{R}^{3}$. This implies the validity of the
	second identity in (\ref{XprimaKprima}).
	
	\medskip
	
	b) It is a direct consequence of (a) using the invariance by the action of $%
	G$. For the sake of completeness we present the computations. By
	the hypothesis and (\ref{dgLg}) we have that 
	\begin{equation*}
		db\circ dg=d\left( dg\right) \circ d\bar{b}=d\mathcal{L}_{g}\circ d\bar{b}%
		\text{.}
	\end{equation*}%
	Evaluating at $x$ we get the first expression in (\ref{dbyb}). The second
	one follows from the identities defining $y^{b}$ and $x^{\bar{b}}$ and the
	standard facts that $\left( dg\right) \circ C_{u}=C_{dg\left( u\right)
	}\circ \left( dg\right) $ and $\nabla _{v}\left( dg\circ \bar{b}\circ g^{-1}\right) =\left( dg\right) \circ
		\left( \nabla _{u}\bar{b}\right) \circ g^{-1}$ 
		for $v=dg\left( u\right) $, since $g$ is a direct isometry of $M$.
\end{proof}


\begin{proof}[Proof of Proposition \protect\ref{helicitySL}]
	Let $b:U\rightarrow SO\left( U\right) $ be a section and let $p\in U$. We
	will show that the tangent space of the submanifold determined by $b$ at $%
	b\left( p\right) $ is space-like if and only if $b$ has positive vorticity at 
	$p$. It suffices to see that, given $y\in T_{p}M_{\kappa }$, $\left\Vert
	\left( db\right) _{p}\left( y\right) \right\Vert >0$ if and only if $%
	\left\langle y^{b},y\right\rangle >0$.
	
	Suppose that $b\left( p\right) =\mathcal{I}\left( g\right) $ with $g\in
	G_{\kappa }$. Let $\bar{b}=\left( dg\right) ^{-1}\circ b\circ g$, which is a
	section $g^{-1}\left( U\right) \rightarrow SO\left( M_{\kappa }\right) $
	with $\bar{b}\left( o\right) =\iota $. Using the notation and assertions of
	Lemma \ref{LemaZ}, we write $y=\left( dg\right) _{o}\left( x\right) $ with $%
	x\in T_{o}M_{\kappa }$ and have that 
	\begin{equation*}
		\left\Vert \left( db\right) _{p}\left( y\right) \right\Vert =\left\Vert d%
		\bar{b}_{o}\left( x\right) \right\Vert =\left\Vert Z_{\kappa }\left(
		x,x^{b}\right) \right\Vert =\tfrac{1}{2}\left\langle x,x^{b}\right\rangle =%
		\tfrac{1}{2}\left\langle y,y^{\bar{b}}\right\rangle \text{.}
	\end{equation*}%
	The first and last equalities follow from part (b) of that lemma (since $%
	\mathcal{L}_{g}$ is an isometry of $SO\left( M_{\kappa }\right) $ for the
	metric in (\ref{33})) and the second one follows from part (a), since $%
	\mathcal{I}$ is an isometry.
\end{proof}


\begin{remark}
The local sections $\varphi $ of $\mathbb{R}^{3}\rtimes SO_{3}
$ corresponding to the spherical shells $\left\{ v\in \mathbb{R}^{3}\mid
\left( 2k-1\right) \pi <\left\vert v\right\vert <2k\pi \right\} $ with $k\in 
\mathbb{N}$ in Theorem \ref{main} have directions of negative vorticity at any point.
\end{remark}

\section{The intrinsic geometry of the  submanifold $\protect\phi %
	\left( B\right) $\label{ig}}

We present some properties of the submanifold $\phi :B=\left\{ v\in \mathbb{R%
}^{3}\mid \left\vert v\right\vert <\pi \right\} \rightarrow \mathbb{R}%
^{3}\rtimes S^{3}$ of Theorem \ref{mainLIft}, endowed with the induced
Riemannian metric.

\begin{proposition}
	For any unit vector $u\in \mathbb{R}^{3}$, the radial curve $r\mapsto \phi
	\left( ru\right) $ is the reparametrization of an inextensible geodesic in $%
	\phi \left( B\right) $ of finite length. In particular, $\phi \left(
	B\right) $ is not complete.
	
	The metric completion of $\phi \left( B\right) $ is $\left( \phi \left(
	B\right) \cup \left\{ \ast \right\} ,d\right) $, obtained by adding a point $%
	\ast $, and the metric topology is the one-point compactification of $\phi
	\left( B\right) $, rendering it homeomorphic to $S^{3}$. The distance $d$ is
	not Riemannian.
\end{proposition}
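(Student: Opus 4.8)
The plan is to work entirely in the lifted picture $\phi:B\to\mathbb{R}^3\rtimes S^3$, exploiting the rotational symmetry established in Lemma~\ref{space-like}. First I would compute the induced Riemannian metric on $B$ in polar coordinates $(r,\text{spherical angles})$. From equations~(\ref{dphii}) and~(\ref{dphiu}) with $l=\ell$ and $\theta(r)=r/2$, the radial vector has squared norm $\ell'(r)\,\theta'(r)=\tfrac12\ell'(r)$, and a unit tangential vector $z\perp i$ has squared norm $\tfrac{1}{2r^2}\ell(r)\sin(2\theta(r)\big)=\tfrac{1}{2r^2}\ell(r)\sin r$, while radial and tangential directions are orthogonal. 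Hence the induced metric is $g=\tfrac12\ell'(r)^2\,dr^2 + \tfrac{1}{2r^2}\ell(r)\sin r\,g_{S^2}$ on $(0,\pi)\times S^2$, where $g_{S^2}$ is the round metric; here I have used $\ell'(r)>0$ to write $\ell'(r)\,\theta'(r)=(\text{something})\,dr^2$ after reparametrizing by arclength along rays. Introducing the arclength parameter $s(r)=\int_0^r \ell'(t)/\sqrt2\,dt = \ell(r)/\sqrt2$ along radial rays turns $g$ into $ds^2 + f(s)^2 g_{S^2}$ for a suitable warping function $f$, i.e.\ a warped-product (rotationally symmetric) metric; radial curves are then unit-speed geodesics, being fixed-point sets of the isometric $S^3$-action (Proposition~\ref{PiIso}), or directly because in a warped product $ds^2+f(s)^2 g_{S^2}$ the curves $s\mapsto(s,\text{pt})$ are geodesics.

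Next I would verify the completeness and topology claims. Each radial geodesic $r\mapsto\phi(ru)$ has finite length $\int_0^\pi \tfrac{1}{\sqrt2}\ell'(r)\,dr = \ell(\pi)/\sqrt2 = c\,\pi^{1/3}/\sqrt2 < \infty$, and it is inextensible within $\phi(B)$ since $r=\pi$ is not attained; so $\phi(B)$ is not complete. For the metric completion, the key is that all radial geodesics emanating from $\phi(0)=(0,1)$ have the \emph{same} finite length $L:=c\pi^{1/3}/\sqrt2$, and the warping function $f(s)$ satisfies $f(s)\to 0$ as $s\to L^-$: indeed $f(s)^2 = \tfrac{1}{2r^2}\ell(r)\sin r$ and as $r\to\pi^-$ one has $\sin r\to 0$, so $f\to 0$. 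This forces every sequence $\phi(r_nu_n)$ with $r_n\to\pi$ to be Cauchy with a common limit, because the $S^2$-diameter of the slice at parameter $s$ is $\pi f(s)\to 0$; thus exactly one point $\ast$ is added. A Cauchy sequence either stays in a compact subset of $\phi(B)$ (and converges there) or has $r_n\to\pi$ (and converges to $\ast$), so the completion is $\phi(B)\cup\{\ast\}$ and the induced topology is the one-point compactification. Finally, $\phi(B)$ together with this added pole is homeomorphic to $S^3$: it is a warped product $[0,L]\times S^2/\!\sim$ with both ends collapsed (the inner end $s=0$ by smoothness of $\phi$ at the origin, where $f(s)\sim s/\sqrt2 \cdot(\text{const})$, the outer end $s=L$ by the argument just given), which is the standard suspension description of $S^3$; concretely one builds the homeomorphism by sending $(s,\xi)\in[0,L]\times S^2$ to the point at spherical distance $\pi s/L$ from a north pole of $S^3$ in the direction $\xi$.

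The last claim—that $d$ is not Riemannian, i.e.\ not induced by any smooth Riemannian metric on the topological sphere $\phi(B)\cup\{\ast\}$—is the main obstacle and requires a curvature or cone-angle obstruction at $\ast$. The plan is to examine the asymptotics of the warping function near $s=L$. Writing $r=\pi-\varepsilon$, one has $\sin r = \sin\varepsilon\sim\varepsilon$ and $\ell(r)=c(r-\sin r)^{1/3}\to c(\pi)^{1/3}$, while $L - s = \tfrac{1}{\sqrt2}(\ell(\pi)-\ell(\pi-\varepsilon))\sim \text{const}\cdot\varepsilon^{?}$; computing $\ell'(r)=\tfrac{c}{3}(1-\cos r)(r-\sin r)^{-2/3}$ near $r=\pi$ gives $\ell'(\pi)=\tfrac{2c}{3}\pi^{-2/3}>0$ a positive constant, so $L-s\sim \text{const}\cdot\varepsilon$ and therefore $f(s)^2\sim\text{const}\cdot\varepsilon\sim\text{const}\cdot(L-s)$, i.e.\ $f(s)\sim\text{const}\cdot(L-s)^{1/2}$. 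A smooth Riemannian metric collapsing an $S^2$ smoothly to a point would force $f(s)\sim(L-s)$ to first order (the link must be a round $S^2$ of the correct normalization, $f'(L)=-1$); the exponent $1/2\neq 1$ shows the completion carries a genuine metric-space singularity at $\ast$ (in fact the space is not even a topological manifold-with-smooth-metric there, as small metric spheres around $\ast$ have area going to zero faster than $(L-s)^2$). I would make this precise by showing the metric balls $B_d(\ast,\rho)$ have volume $\sim\rho^{?}$ with an exponent incompatible with any $3$-dimensional Riemannian volume growth $\sim\rho^3$, or more simply by noting that the metric $ds^2+f(s)^2g_{S^2}$ with $f\sim(L-s)^{1/2}$ does not extend $C^2$ (or even continuously as a Riemannian metric) across the collapsed slice, while a Riemannian distance on a smooth manifold always locally looks Euclidean. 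This asymptotic/regularity analysis at the pole is where the real work lies; the rest is a routine unwinding of the warped-product structure.
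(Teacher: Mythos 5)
Your plan follows essentially the same route as the paper: radial lines are geodesics as fixed-point sets of the isometric $S^3$-action, the induced metric is a rotationally symmetric warped product, finiteness of the radial length plus the collapse of the spherical slices ($f\to 0$) gives the one-point completion homeomorphic to $S^3$, and the failure of $d$ to be Riemannian comes from the wrong asymptotics of the warping function, $f(s)\sim \mathrm{const}\,(L-s)^{1/2}$ instead of $(L-s)$. Two computational slips to fix: the squared norm of the radial vector is $\tfrac12\ell'(r)$, not $\tfrac12\ell'(r)^2$, so the arclength is $s(r)=\int_0^r\left(\ell'(t)/2\right)^{1/2}dt$, not $\ell(r)/\sqrt2$; both finiteness and the estimate $L-s\sim\mathrm{const}\cdot(\pi-r)$ survive (since $\ell'$ is bounded and $\ell'(\pi)>0$), but the explicit value of $L$ you give is wrong. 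For the last claim, be careful that "the warping function would have to satisfy $f'(L)=-1$" only rules out rotationally symmetric smooth metrics; to exclude \emph{every} Riemannian metric inducing $d$ you need an intrinsic invariant of the metric space, and the correct one is the one you gesture at in passing: the area of small distance spheres about $\ast$ grows like $\rho$ rather than $4\pi\rho^2$, whereas on any Riemannian $3$-manifold $\lim_{\rho\to 0}A(\rho)/(4\pi\rho^2)=1$ (the paper cites Gray--Vanhecke for this). With that step made precise, your argument coincides with the paper's.
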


\begin{proof}
	It is convenient to work with the Riemannian metric on $B$ induced by $\phi $%
	, which we call $g$. Notice that $S^{3}$ acts on $\left( B,g\right) $ by
	isometries via $\left( q,x\right) \mapsto I_{q}\left(x\right)$, by the rotational
	symmetry stated in Proposition \ref{PiIso}.
	
	Given a unit vector $w\in \mathbb{R}^{3}$, the intersection with $B$ of the
	line $\mathbb{R}w$ through the origin is the connected component the set of
	fixed points of an isometry of $B$, for instance, any nontrivial rotation
	around $\mathbb{R}w.$ Then it is the image of a geodesic in $\left(
	B,g\right) $ (see, for instance, Proposition 10.3.6 in \cite{BCO}). Hence,
	the arc length reparametrization with respect to $g$ of $\alpha \left(
	r\right) =rw$ is a geodesic. We compute 
	\begin{equation*}
		\left\vert \alpha ^{\prime }\left( r\right) \right\vert ^{2}=\left\Vert
		\left( \phi \circ \alpha \right) ^{\prime }\left( r\right) \right\Vert
		=\left\Vert d\phi _{rw}\left( w\right) \right\Vert =\tfrac{1}{2}\,\ell
		^{\prime }\left( r\right)
	\end{equation*}%
	(we have used (\ref{dphii}) with $\theta \left( r\right) =r/2$; the
	expression is valid with $w$ instead of $i$ due to the rotational symmetry).
	Now, let 
	\begin{equation*}
		\sigma :\left( -\pi ,\pi \right) \rightarrow \mathbb{R}\text{,\ \ \ \ \ \ }%
		\sigma \left( r\right) =\int_{0}^{r}\tfrac{1}{\sqrt{2}}\left( \ell ^{\prime
		}\left( t\right) \right) ^{1/2}~dt
	\end{equation*}%
	be the signed arc length of $\alpha $. We call $L=\sigma \left( \pi \right) $%
	, which is finite, since $\ell ^{\prime }\ $is bounded on $\left( -\pi ,\pi
	\right) $ (see (\ref{lPrime})). Hence $\left( -L,L\right) \rightarrow B$, $%
	t\mapsto \sigma ^{-1}\left( t\right) w$ is an inextensible geodesic of $%
	\left( B,g\right) $. Therefore, the geodesic exponential map of $\left(
	B,g\right) $ at $0$ is given by 
	\begin{equation*}
		\text{Exp}_{0}:\left\{ v\in \mathbb{R}^{3}\mid \left\vert v\right\vert
		<L\right\} \rightarrow \left( B,g\right) \text{,\ \ \ \ \ \ Exp}_{0}\left(
		tw\right) =\sigma ^{-1}\left( t\right) w
	\end{equation*}%
	and it is moreover a diffeomorphism. In particular, $B$ is a normal ball
	centered at the origin and for any $0<r<\pi $ the sphere $S_{r}=\left\{ v\in
	B\mid \left\vert v\right\vert =r\right\} $ is a geodesic sphere, which is
	round, since it is preserved by the action of $S^{3}$.
	
	Let $\left( N,d\right) $ be the completion of $\left( B,g\right) $. We
	consider equivalences classes of Cauchy sequences in $\left( B,g\right) $.
	In order to show that $N$ is the one-point compactification of $B$, it
	suffices to verify that given sequences $r_{n}u_{n}$, $s_{n}v_{n}$ in $B$
	with $\left\vert u_{n}\right\vert =\left\vert v_{n}\right\vert =1$ and $%
	\lim_{n\rightarrow \infty }r_{n}=\lim_{n\rightarrow \infty }s_{n}=\pi $,
	then $d_{g}\left( r_{n}u_{n},s_{n}v_{n}\right) \rightarrow 0$ as $%
	n\rightarrow \infty $.
	
	Suppose that $0\leq s\leq r<\pi $ and $\left\vert u\right\vert =\left\vert
	v\right\vert =1$. Consider the piecewise smooth curve obtained by
	juxtaposing parametrizations of the radial segment joining $su$ with $ru$
	and an arc of circle joining $ru$ with $rv$. The segment has $g$-length 
	\begin{equation}
		\sigma ^{-1}\left( r\right) -\sigma ^{-1}\left( s\right) =\int_{s}^{r}\tfrac{%
			1}{\sqrt{2}}\left( \ell ^{\prime }\left( t\right) \right) ^{1/2}~dt\text{.}
		\label{eq1}
	\end{equation}%
	Also, by (\ref{dphiu}) with $~l=\ell $ and $\theta \left( r\right) =r/2$,
	the $g$-length of any Euclidean great circle in $S_{r}$ is 
	\begin{equation}
		2\pi r\left\Vert d\phi _{ri}\left( z\right) \right\Vert ^{1/2}=\sqrt{2}~\pi
		\left( \ell \left( r\right) \sin r\right) ^{1/2}  \label{eq2}
	\end{equation}%
	(any unit $z\perp i$). Since both (\ref{eq1}) and (\ref{eq2}) tend to $0$ as 
	$s\rightarrow \pi ^{-}$, we have that $d_{g}\left(
	r_{n}u_{n},s_{n}v_{n}\right) \rightarrow 0$ as $n\rightarrow \infty $.
	
	For $0\leq r<\pi $, $S_{r}$ defined above is the $d$-sphere in $N$ of radius 
	$\sigma \left( \pi \right) -\sigma \left( r\right) $ centered at $\ast $. By
	(\ref{dphiu}) with $l=\ell $ and $\theta \left( r\right) =r/2$, its $g$-area
	is%
	\begin{equation*}
		4\pi r^{2}\left\vert d\phi _{ri}\left( z\right) \right\vert ^{2}=4\pi
		r^{2}\left\Vert d\varphi _{ri}\left( z\right) \right\Vert =2\pi \ell \left(
		r\right) \sin r\text{.}
	\end{equation*}
	
	Now, for $\rho >0$ let $A\left( \rho \right) $ be the area of the distance 
	sphere of radius $\rho $ centered at the point $\ast $. We compute 
	\begin{equation*}
		\lim_{\rho \rightarrow 0}\frac{A\left( \rho \right) }{4\pi \rho ^{2}}%
		=\lim_{r\rightarrow \pi ^{-}}\frac{g\text{-area~}\left( S_{r}\right) }{4\pi
			\left( \sigma \left( \pi \right) -\sigma \left( r\right) \right) ^{2}}%
		=\lim_{r\rightarrow \pi ^{-}}\frac{\ell \left( r\right) \sin r}{\left(
			\int_{r}^{\pi }\left( \ell ^{\prime }\left( t\right) \right)
			^{1/2}~dt\right) ^{2}}=\infty
	\end{equation*}%
	by L'Hospital's rule, since the derivatives of the numerator and the
	denominator tend to $-\ell \left( \pi \right) \neq 0$ and $0$ as $%
	r\rightarrow \pi ^{-}$, respectively. Then the metric is not Riemannian,
	since otherwise, the limit would have been equal to $1$ (see for instance 
	\cite{GV}).
\end{proof}

\medskip 

Let $\mathcal{S}$ be the boundary of $\phi \left( B\right) $ in $\mathbb{R}%
^{3}\rtimes S^{3}$, which is a two-sphere. By the proposition above, one may
think that it collapses to the point $\ast $ in $N$. The following
proposition should confirm the insight we gained in Section
2: On the one hand, the metric on $\mathbb{R}^{3}\rtimes S^{3}$ degenerates
completely on $\mathcal{S}$, and on the other hand, positive vorticity fails
for the section of $SO\left( \mathbb{R}^{3}\right) $ associated to $\Phi
=\Pi \circ \phi $ for directions tangent to the two-sphere $\pi \left( \Pi
\left( \mathcal{S}\right) \right) $ in $\mathbb{R}^{3}$.

\begin{proposition}
	The boundary $\mathcal{S}$ equals $\left\{ \left( \ell \left( \pi \right)
	u,u\right) \in \operatorname{Im}\mathbb{H}\times S^{3}\mid \left\vert u\right\vert
	=1\right\} $ and its projection to $\mathbb{R}^{3}\rtimes S^{3}$ is $\left\{
	\left( \ell \left( \pi \right) u,R_{\pi u}\right) \mid \left\vert
	u\right\vert =1\right\} $. Both are totally null.
	
	Let $b:\mathbb{R}^{3}\rightarrow SO\left( \mathbb{R}^{3}\right) $ be the
	section associated with $\Phi $, that is, $b\left( \ell \left( r\right)
	u\right) =R_{ru}$. Let $\partial B=\left\{ v\in \mathbb{R}^{3}\mid \left\vert v\right\vert
	=\ell \left( \pi \right) \right\} $ be the boundary of $B$. Then $\left\langle X^{b},X\right\rangle =0
	$ for any vector field $X$ tangent to $\partial B$. 
\end{proposition}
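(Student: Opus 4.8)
The plan is first to identify $\mathcal{S}$ explicitly and then to examine its tangent planes. For the identification, I would observe that $\phi$ is continuous and $\{v\in\operatorname{Im}\mathbb{H}\mid|v|\le\pi\}$ is compact, so $\phi(\{|v|\le\pi\})$ is closed; since $\phi|_{B}$ is injective (Theorem \ref{mainLIft}) and, $\ell$ being strictly increasing, $\phi(B)$ misses $\phi(\{|v|=\pi\})$, it follows that $\mathcal{S}=\phi(\{|v|=\pi\})$. Then, for $|u|=1$ in $\operatorname{Im}\mathbb{H}$ one has $u^{2}=-1$, hence $\exp(\pi u/2)=\cos(\pi/2)+u\sin(\pi/2)=u$; by (\ref{phi}) this gives $\mathcal{S}=\{(\ell(\pi)u,u)\mid|u|=1\}$. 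Applying $\Pi$ from (\ref{Pi}), $\Pi(\ell(\pi)u,u)=(\ell(\pi)u,I_{u})$, and since $u=\exp((\pi/2)u)$, the rotation $I_{u}$ is rotation through the angle $\pi$ about the line $\mathbb{R}u$, which is $R_{\pi u}$ by (\ref{Rx}) (for angle $\pi$ the orientation of the axis is immaterial). This yields the stated description of the projection.

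For the totally-null claim I would parametrize $\mathcal{S}$ by $\psi(u)=(\ell(\pi)u,u)$, with $u$ ranging over the unit sphere of $\operatorname{Im}\mathbb{H}$, so that $T_{\psi(u)}\mathcal{S}=\{(\ell(\pi)z,z)\mid z\in\operatorname{Im}\mathbb{H},\ z\perp u\}$. Using (\ref{dL}) I would pull this subspace back to the identity by $(dL_{(\ell(\pi)u,u)})^{-1}$, which sends $(\ell(\pi)z,z)$ to $(\ell(\pi)\,\bar{u}zu,\bar{u}z)$; for $z\perp u$ the identities $\bar{u}=-u$ and $uz=-zu$ give $\bar{u}z=z\times u$ and $\bar{u}zu=-z$, so this pulled-back vector is $(-\ell(\pi)z,\,z\times u)$. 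Then the left invariant metric (\ref{prmS3}) produces, for $z_{1},z_{2}\perp u$,
\begin{equation*}
\langle(-\ell(\pi)z_{1},z_{1}\times u),(-\ell(\pi)z_{2},z_{2}\times u)\rangle=-\tfrac{\ell(\pi)}{2}\bigl(\langle z_{1},z_{2}\times u\rangle+\langle z_{2},z_{1}\times u\rangle\bigr)=0,
\end{equation*}
since the two scalar triple products differ by a transposition. Hence $T\mathcal{S}$ is totally null. For the projection, I would invoke that $\Pi$ is a local isometry (Proposition \ref{PiIso}), so it carries each tangent plane of $\mathcal{S}$ onto the corresponding tangent plane of $\Pi(\mathcal{S})$ preserving the metric, which is therefore totally null as well.

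For the last assertion I would note that, by definition, the image of $b|_{\partial B}$ is exactly $\{(\ell(\pi)u,R_{\pi u})\mid|u|=1\}=\Pi(\mathcal{S})$, an embedded two-sphere; hence for any $X$ tangent to $\partial B$ at $\ell(\pi)u$ the vector $(db)_{\ell(\pi)u}(X)$ lies in the tangent plane of $\Pi(\mathcal{S})$, which is totally null, so $\|(db)_{\ell(\pi)u}(X)\|=0$. The identity $\|(db)_{p}(y)\|=\tfrac12\langle y^{b},y\rangle$ established in the proof of Proposition \ref{helicitySL} then forces $\langle X^{b},X\rangle=0$, which is the claim.

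The main obstacle is modest: it is the left-translation pull-back together with the quaternion identities $\bar{u}z=z\times u$ and $\bar{u}zu=-z$ for $z\perp u$ (which rest on $\bar{u}=-u$ and the anticommutation $uz=-zu$ of perpendicular imaginary quaternions); one must also remember that ``totally null'' demands the vanishing of the off-diagonal pairings $\langle z_{1},z_{2}\times u\rangle+\langle z_{2},z_{1}\times u\rangle$, which is just the antisymmetry of the determinant. Everything else is bookkeeping and the citations of Propositions \ref{PiIso} and \ref{helicitySL} and of Theorem \ref{mainLIft}.
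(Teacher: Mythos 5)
Your proof is correct. For the identification of $\mathcal{S}$ and of its projection, and for the totally null property, you follow essentially the paper's route: the paper verifies nullity by specializing its earlier formula (\ref{dphiu}) at $r=\pi$ and invoking the rotational symmetry, whereas you redo the computation from scratch by left-translating the tangent plane $\left\{ \left( \ell \left( \pi \right) z,z\right) \mid z\perp u\right\} $ to the identity via (\ref{dL}) and using the quaternion identities $\bar{u}z=z\times u$ and $\bar{u}zu=-z$ for $z\perp u$; both are complete, yours being marginally more self-contained (and your preliminary argument that $\mathcal{S}=\phi\left( \left\{ \left\vert v\right\vert =\pi \right\} \right) $ supplies a point the paper leaves implicit). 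The genuine difference is in the last assertion. The paper computes $X^{b}$ explicitly: taking $v=\ell \left( \pi \right) u$ and $x=\ell \left( \pi \right) y$ with $y\perp u$, it differentiates $t\mapsto R_{\pi \alpha \left( t\right) }$ along the great circle $\alpha \left( t\right) =\cos t~u+\sin t~y$ using (\ref{RxQ}) and finds $x^{b}=2\left( u\times y\right) $, which is orthogonal to $x$. You instead observe that $b\left( \partial B\right) =\Pi \left( \mathcal{S}\right) $, so $\left( db\right) \left( X\right) $ is a null vector by the part of the proposition you have already proved, and then convert $\left\Vert \left( db\right) _{p}\left( y\right) \right\Vert =0$ into $\left\langle y^{b},y\right\rangle =0$ via the identity $\left\Vert \left( db\right) _{p}\left( y\right) \right\Vert =\tfrac{1}{2}\left\langle y,y^{b}\right\rangle $ established in the proof of Proposition \ref{helicitySL}. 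This is shorter and makes transparent the logical dependence of the vorticity statement on the nullity statement, at the price of yielding less information: the paper's computation identifies the vorticity vector $x^{b}$ itself, not merely its pairing with $x$. Both arguments establish the claim as stated.
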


We recall an expression for the rotation $R_{x}$ in (\ref{Rx}) in
quaternionic terms that will be useful below: For any $0\neq x\in \mathbb{R}%
^{3}\cong \operatorname{Im}\mathbb{H}$ one has
\begin{equation}
	R_{x}=I\left( \cos \left( \left\vert x\right\vert /2\right) +\sin \left(
	\left\vert x\right\vert /2\right) x/\left\vert x\right\vert \right) \text{,}
	\label{RxQ}
\end{equation}%
with $I$ as defined before (\ref{Pi}).

\begin{proof}
	The boundary of $\phi \left( B\right) $ in $\operatorname{Im}\mathbb{H}\times S^{3}$
	equals $\left\{ \left( \ell \left( \pi \right) u,u\right) \mid \left\vert
	u\right\vert =1\right\} $, since $e^{\pi u/2}=u$. By (\ref{dphiu}) with $%
	r=\pi $, $l=\ell $ and $\theta \left( r\right) =r/2$, it is totally null (by
	the rotational symmetry we may consider $\pi i$ instead of an arbitrary
	element of  $\partial B$). 	
	The projection of $\mathcal{S}$ onto $\mathbb{R}^{3}\rtimes S^{3}$ is as
	stated, since for $\left\vert u\right\vert =1$, $I_{u}=R_{\pi u}$ holds by (%
	\ref{RxQ}). It is also totally null, since $\Pi $ is a local isometry.
	
	Let $v=\ell \left( \pi \right) u\in \partial B$, with $\left\vert u\right\vert =1$
	and let $x\in T_{v}\left(\partial B\right)=v^{\bot }$. We may suppose that $x=\ell \left( \pi
	\right) y$ with $\left\vert y\right\vert =1$, $y\perp u$. By definition of $%
	x^{b}$, 
	\begin{equation}
		\left( \nabla _{x}b\right) _{v}=C_{x^{b}}\circ b\left( v\right) \text{.}
		\label{xxb}
	\end{equation}
	
	In order to compute the left hand side we call $\alpha \left( t\right) =\cos
	t~u+\sin t~y$. Note that $\beta =\ell \left( \pi \right) \alpha $ satisfies $%
	\beta \left( 0\right) =v$ and $\beta ^{\prime }\left( 0\right) =x$. By (\ref%
	{RxQ}), 
	\begin{equation}
		b\left( \ell \left( \pi \right) \alpha \left( t\right) \right) \left(
		z\right) =R_{\pi \alpha \left( t\right) }\left( z\right) =\alpha \left(
		t\right) z\overline{\alpha \left( t\right) }\text{,}  \label{bR}
	\end{equation}%
	for $z\in \mathbb{R}^{3}$. Then we have%
	\begin{eqnarray}
		\left( \nabla _{x}b\right) _{v}\left( z\right)  &=&\left( \nabla _{\ell
			\left( \pi \right) y}b\right) \left( z\right) =\left. \frac{D}{dt}%
		\right\vert _{0}b\left( \ell \left( \pi \right) \alpha \left( t\right)
		\right) \left( z\right)   \label{yzu} \\
		&=&\left. \frac{d}{dt}\right\vert _{0}\alpha \left( t\right) z\overline{%
			\alpha \left( t\right) }=yz\left( -u\right) -uzy=-yzu-uzy\text{.}  \notag
	\end{eqnarray}
	
	Evaluating (\ref{bR}) at $t=0$ yields $b\left( v\right) \left( z\right) =uz%
	\bar{u}=-uzu$; in particular $\left( b\left( v\right) \right) ^{-1}=b\left(
	v\right) $ ($u^{2}=-1$). Consequently, by (\ref{xxb}), 
	\begin{equation*}
		C_{x^{b}}\left( w\right) =\left( \nabla _{x}b\right) _{v}\left( \left(
		b\left( v\right) \right) ^{-1}\left( w\right) \right) =\left( \nabla
		_{x}b\right) _{v}\left( -uwu\right) \text{.}
	\end{equation*}
	
	Using (\ref{yzu}) and properties of quaternions, such as $u^{2}=-1$ and $%
	-yu=uy=u\times y$ since $\left\{ y,u\right\} $ is orthonormal, we have%
	\begin{equation*}
		C_{x^{b}}\left( w\right) =uyw-wuy=2\left( u\times y\right) \times w=C_{2\left( u\times y\right)
		}\left( w\right) 
	\end{equation*}%
	for all $w$. Hence, $x^{b}=2\left( u\times y\right) $, which is orthogonal to 
	$x$.
\end{proof}

\section{Optimal frame vorticity\label{oh}}

\begin{definition}
	Let $M$ be an oriented three-dimensional Riemannian manifold. A section $b$
	of $SO\left( M\right) \rightarrow M$ is said to have \textbf{optimal vorticity} 
	if for any vector field $X$ on $M$, the associated vector field $%
	X^{b}$ satisfies $X^{b}=X$.
\end{definition}

We will show below that $SO\left( S^{3}\right) \rightarrow S^{3}$ admits a
global section with optimal vorticity.

Recall that $P:S^{3}\times S^{3}\rightarrow SO_{4},$ $P\left( p,q\right)
\rightarrow L_{p}\circ R_{\bar{q}}$, is a surjective two-to-one morphism,
where $L_{p},R_{p}:G\rightarrow G$ denote left and right multiplication by $p
$, respectively.

\begin{lemma}
	\label{S3xS3so4}Let $G_{1}=SO_{4}$ and $S^{3}\times S^{3}$ be endowed with
	the split pseudo-Riemannian metrics given by \emph{(\ref{33})} and 
	\begin{equation}
		\left\langle \left( x,y\right) ,\left( x^{\prime },y^{\prime }\right)
		\right\rangle =\tfrac{1}{2}\left( \left\langle x,x^{\prime }\right\rangle
		-\left\langle y,y^{\prime }\right\rangle \right) \text{,}  \label{splitS3xS3}
	\end{equation}%
	for $x,x^{\prime }\in T_{p}S^{3}$, $y,y^{\prime }\in T_{q}S^{3}$,
	respectively. Then $P$ is a local isometry.
\end{lemma}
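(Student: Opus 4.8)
The plan is to use that $P$ is a Lie group morphism together with the bi-invariance of both metrics, which reduces everything to the identity. Concretely, for any Lie group morphism $f$ one has $f\circ L_{g}=L_{f(g)}\circ f$, hence $df_{g}=(dL_{f(g)})_{e}\circ df_{e}\circ (dL_{g})_{e}^{-1}$; since left translations are isometries for both metrics, it suffices to check that $dP_{(1,1)}\colon T_{1}S^{3}\times T_{1}S^{3}=\operatorname{Im}\mathbb{H}\times\operatorname{Im}\mathbb{H}\longrightarrow\mathfrak{g}_{1}=\mathfrak{so}_{4}$ is a linear isometry. The metric $(\ref{33})$ on $\mathfrak{g}_{1}$ was already shown to be bi-invariant, and the metric $(\ref{splitS3xS3})$ is bi-invariant because it is a combination of the two bi-invariant round metrics on the factors.

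The first real step is to compute $dP_{(1,1)}$. Viewing $P(p,q)$ as the linear map $x\mapsto px\bar q$ on $\mathbb{H}=\mathbb{R}^{4}$ and differentiating along curves with $p'(0)=\xi$ and $q'(0)=\eta$ (so that $\overline{q(t)}$ has derivative $\bar\eta=-\eta$ at $t=0$ since $\eta\in\operatorname{Im}\mathbb{H}$), one finds that $dP_{(1,1)}(\xi,\eta)$ is the operator $x\mapsto\xi x-x\eta$. To rewrite this in the parametrization $Z_{\kappa}(\cdot,\cdot)$ of $\mathfrak{g}_{\kappa}$ with $\kappa=1$, I evaluate it on $e_{0}=1$, obtaining $\xi-\eta\in\operatorname{Im}\mathbb{H}$, and on $v\in\operatorname{Im}\mathbb{H}$, obtaining via the quaternion product $\xi v-v\eta=\langle\eta-\xi,v\rangle+(\xi+\eta)\times v$. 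Comparing with $Z_{1}(a,b)v=-\langle a,v\rangle+b\times v$ yields
\begin{equation*}
dP_{(1,1)}(\xi,\eta)=Z_{1}\big(\xi-\eta,\ \xi+\eta\big).
\end{equation*}
(As a sanity check, this operator is skew-symmetric on $\mathbb{R}^{4}$, being a sum of a left and a right multiplication by imaginary quaternions, so it indeed lies in $\mathfrak{so}_{4}$.)

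Finally I would substitute into $(\ref{33})$:
\begin{equation*}
\big\langle Z_{1}(\xi-\eta,\xi+\eta),\,Z_{1}(\xi'-\eta',\xi'+\eta')\big\rangle=\tfrac{1}{4}\big(\langle\xi-\eta,\xi'+\eta'\rangle+\langle\xi'-\eta',\xi+\eta\rangle\big),
\end{equation*}
expand the right-hand side, and observe that the mixed terms $\langle\xi,\eta'\rangle$ and $\langle\eta,\xi'\rangle$ cancel against their transposes by symmetry of $\langle\cdot,\cdot\rangle$, leaving $\tfrac{1}{2}\big(\langle\xi,\xi'\rangle-\langle\eta,\eta'\rangle\big)$, which is exactly the value of $(\ref{splitS3xS3})$ on $\big((\xi,\eta),(\xi',\eta')\big)$. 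Thus $dP_{(1,1)}$ preserves the two bilinear forms; since the target form is nondegenerate and both spaces are $6$-dimensional, $dP_{(1,1)}$ is automatically a linear isomorphism, and therefore $P$ is a local isometry (hence the stated two-to-one covering is a pseudo-Riemannian double cover). The only delicate point is the sign bookkeeping when converting the operator $x\mapsto\xi x-x\eta$ into its $Z_{1}$-coordinates; the remaining steps are short, direct computations.
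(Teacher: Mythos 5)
Your argument is correct and follows essentially the same route as the paper: reduce to the identity by invariance of both metrics, compute $dP_{(1,1)}(\xi,\eta)=Z_{1}(\xi-\eta,\xi+\eta)$ (the paper gets this by writing the operator as $\ell_{\xi}-\rho_{\eta}$ and checking $\ell_{v}=Z_{1}(v,v)$, $\rho_{v}=Z_{1}(v,-v)$, which is the same computation as your evaluation on $1$ and on $\operatorname{Im}\mathbb{H}$), and then compare the bilinear forms. Your final step is in fact slightly more complete than the paper's, which only verifies the quadratic form and leaves polarization implicit.
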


\begin{proof}
	Since both metrics are invariant and $P$ is a surjective morphism, it
	suffices to show that $dP_{\left( 1,1\right) }:T_{1}S^{3}\times T_{1}S^{3}=%
	\operatorname{Im}\mathbb{H}\times \operatorname{Im}\mathbb{H}\rightarrow \mathfrak{so}_{4}$
	is a linear isometry. Let $z\in \mathbb{R}^{4}\cong \mathbb{H}$. For $v,w\in 
	\operatorname{Im}\mathbb{H}$ we compute 
	\begin{equation*}
		\left( dP_{\left( 1,1\right) }\left( v,w\right) \right) \left( z\right)
		=\left. \frac{d}{dt}\right\vert _{0}e^{tv}ze^{-tw}=vz-zw=\left( \ell
		_{v}-\rho _{w}\right) \left( z\right) \text{,}
	\end{equation*}%
	where $\ell _{v}$, $\rho _{v}$ denote left (respectively, right)
	multiplication by $v$ on $\mathbb{H}$. We verify that 
	\begin{equation}
		\ell _{v}=Z_{1}\left( v,v\right) \text{\ \ \ \ \ \ and\ \ \ \ \ \ \ }\rho
		_{v}=Z_{1}\left( v,-v\right) .  \label{lv}
	\end{equation}%
	Indeed, if $a\in \mathbb{R}$ and $u\in \operatorname{Im}\mathbb{H}$, we have 
	\begin{equation*}
		\left( 
		\begin{array}{cc}
			0 & -v^{T} \\ 
			v & C_{v}%
		\end{array}%
		\right) \left( 
		\begin{array}{c}
			a \\ 
			u%
		\end{array}%
		\right) =\left( 
		\begin{array}{c}
			-\left\langle v,u\right\rangle  \\ 
			av+C_{v}u%
		\end{array}%
		\right) \text{.}
	\end{equation*}%
	Since $v\left( a+u\right) =av+vu=av-\left\langle v,u\right\rangle +v\times u$%
	, the first identity in (\ref{lv}) follows. The second one can be checked in
	a similar manner. Hence,%
	\begin{equation*}
		dP_{\left( 1,1\right) }\left( v,w\right) =Z_{1}\left( v,v\right)
		-Z_{1}\left( w,-w\right) =Z\left( v-w,v+w\right) \text{.}
	\end{equation*}%
	Since $2\,\frac{1}{4}\left\langle
	v-w,v+w\right\rangle =\frac{1}{2}\left( \left\vert v\right\vert
	^{2}-\left\vert w\right\vert ^{2}\right) $, (\ref{33}) implies that $P$ is a local isometry.
\end{proof}

\begin{theorem}
	The section $b$ of $SO\left( S^{3}\right) \rightarrow S^{3}$ given by $%
	b\left( p\right) =\left( dL_{p}\right) _{1}$ has optimal vorticity.
\end{theorem}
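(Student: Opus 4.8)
The plan is to exploit the left-invariance of the section $b$ to reduce the claim to a single computation at the base point $o=1\in S^{3}$, where Lemma \ref{LemaZ} applies directly. Write $\ell_{p}\in SO_{4}=G_{1}$ for the matrix of the isometry $L_{p}\colon q\mapsto pq$ of $S^{3}$, so that $b(p)=(dL_{p})_{1}=\mathcal{I}(\ell_{p})$, with $\mathcal{I}$ as in (\ref{identif}) for $\kappa=1$. Taking $g=L_{p}$ in Lemma \ref{LemaZ}(b), the auxiliary section $\bar b=(dg)^{-1}\circ b\circ g=\mathcal{L}_{g^{-1}}\circ b\circ g$ satisfies, by (\ref{dgLg}), $\bar b(q)=\mathcal{I}\bigl(\ell_{p}^{-1}\ell_{pq}\bigr)=\mathcal{I}(\ell_{p^{-1}pq})=\mathcal{I}(\ell_{q})=b(q)$; that is, $\bar b=b$. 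Hence the last identity in (\ref{dbyb}) becomes $y^{b}=(dL_{p})_{1}(x^{b})$ whenever $y=(dL_{p})_{1}(x)$ with $x\in T_{o}S^{3}$, so that, once we know $x^{b}=x$ for every $x\in T_{o}S^{3}$, we obtain $y^{b}=(dL_{p})_{1}(x)=y$ for every $y\in T_{p}S^{3}$ and every $p\in S^{3}$, which is precisely the optimality of $b$.

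To finish, I would prove $x^{b}=x$ at $o$ by matching two expressions for $(db)_{1}$. Since $b(o)=(dL_{1})_{1}=\iota$, Lemma \ref{LemaZ}(a) gives $(db)_{1}(x)=d\mathcal{I}_{I_{6}}\bigl(Z_{1}(x,x^{b})\bigr)$ for $x\in T_{o}S^{3}=\operatorname{Im}\mathbb{H}$. On the other hand, writing $b=\mathcal{I}\circ s$ with $s(p)=\ell_{p}$, one has $(db)_{1}(x)=d\mathcal{I}_{I_{6}}\bigl((ds)_{1}(x)\bigr)$ and $(ds)_{1}(x)=\left.\tfrac{d}{dt}\right\vert_{0}\ell_{e^{tx}}=\ell_{x}$, which by identity (\ref{lv})---already established in the proof of Lemma \ref{S3xS3so4}---equals $Z_{1}(x,x)$. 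Comparing and using the injectivity of $d\mathcal{I}_{I_{6}}$ and of $Z_{1}$ yields $x^{b}=x$.

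The argument is mostly bookkeeping with the two-to-one covers and the identifications $\mathcal{I}$ and $\mathcal{L}_{g}$; the single computational input is the known formula $\ell_{v}=Z_{1}(v,v)$, and the only point to verify with care is the equality $\bar b=b$, which is what lets Lemma \ref{LemaZ}(b) propagate the base-point identity to all of $S^{3}$. As a self-contained alternative not using Lemma \ref{LemaZ}, one can compute $\nabla_{x}b$ by hand: for $x=p\xi\in T_{p}S^{3}$ with $\xi\in\operatorname{Im}\mathbb{H}$, differentiate $t\mapsto b(pe^{t\xi})(y)=pe^{t\xi}y$ in $\mathbb{H}$ and project onto $T_{p}S^{3}=(\mathbb{R}p)^{\perp}$; using $\operatorname{Re}(p\xi y\bar p)=-\langle\xi,y\rangle$ one finds $(\nabla_{x}b)(y)=p(\xi\times y)$, and since $L_{p}$ is an orientation-preserving isometry of $S^{3}$ it preserves the cross product, so $(p\xi)\times(py)=p(\xi\times y)$; hence $(\nabla_{x}b)(y)=C_{x}(b(p)y)$ and $x^{b}=x$ by the uniqueness in Proposition \ref{prompt}.
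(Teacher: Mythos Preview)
Your proof is correct. Your main argument differs from the paper's: the paper computes $\nabla_{x}b$ directly in quaternions, first at $p=1$ (obtaining $(\nabla_{x}b)(z)=\operatorname{proj}(xz)=x\times z=C_{x}(z)$) and then at general $p$ by pushing through the isometry $L_{p}$, whereas you route the reduction to $o$ through Lemma~\ref{LemaZ}(b) via the left-invariance $\bar b=b$, and identify $x^{b}$ at $o$ not by computing $\nabla_{x}b$ but by comparing Lemma~\ref{LemaZ}(a) with the matrix identity $\ell_{x}=Z_{1}(x,x)$ from (\ref{lv}). Your approach is more structural and avoids any quaternionic differentiation at the base point, at the cost of importing two lemmas; the paper's approach is self-contained and hands-on. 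Your closing ``self-contained alternative'' is essentially the paper's proof, carried out in a single step at general $p$ rather than split into $p=1$ and $p$ arbitrary.
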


\begin{proof}
	Let $X$ be a vector field on $S^{3}$ and $p\in S^{3}$. We want to see that $%
	\nabla _{X_{p}}b=C_{X_{p}}\circ b$. We consider first the case $p=1$ and
	call $x=X_{1}$. Let p $:\mathbb{H}\rightarrow T_{1}S^{3}$ be the orthogonal
	projection. For $z\in T_{1}S^{3}=\operatorname{Im}\left( \mathbb{H}\right) $ we
	compute 
	\begin{eqnarray*}
		\left( \nabla _{x}b\right) \left( z\right)  &=&\nabla _{x}\left( q\mapsto
		\left( dL_{q}\right) _{1}\left( z\right) \right) =\left. \frac{D}{dt}%
		\right\vert _{0}\left( dL_{e^{tx}}\right) _{1}\left( z\right) =\text{p~}%
		\left( \left. \frac{d}{dt}\right\vert _{0}e^{tx}z\right)  \\
		&=&xz-\left\langle xz,1\right\rangle =xz-\operatorname{Re}\left( xz\right) =x\times
		z=C_{x}\left( z\right) \text{.}
	\end{eqnarray*}%
	Hence $x^{b}=x$. Now, we deal with the case when $p$ is arbitrary in $S^{3}$%
	. Suppose $X_{p}=\left( dL_{p}\right) _{1}\left( x\right) $. Then $\gamma
	\left( t\right) =pe^{tx}$ has initial velocity $X_{p}$. For $z\in \operatorname{Im}%
	\mathbb{H}$ we compute 
	\begin{eqnarray*}
		\left( \nabla _{X_{p}}b\right) \left( z\right)  &=&\left. \frac{D}{dt}%
		\right\vert _{0}\left( dL_{pe^{tx}}\right) _{1}\left( z\right) =\left(
		dL_{p}\right) _{1}\left. \frac{D}{dt}\right\vert _{0}\left(
		dL_{e^{tx}}\right) _{1}\left( z\right)  \\
		&=&\left( dL_{p}\right) _{1}\left. \frac{D}{dt}\right\vert _{0}b\left(
		e^{tx}\right) \left( z\right) =\left( dL_{p}\right) _{1}\left( \nabla
		_{x}b\right) \left( z\right)  \\
		&=&\left( dL_{p}\right) _{1}C_{x}\left( z\right) =C_{\left( dL_{p}\right)
			_{1}x}\left( \left( dL_{p}\right) _{1}\left( z\right) \right)
		=C_{X_{p}}\left( b\left( z\right) \right) \text{.}
	\end{eqnarray*}%
	Consequently, $X^{b}=X$, as desired.
\end{proof}

Not surprisingly, the section $b$ in the previous theorem is also
distinguished concerning the volume:

\begin{theorem}
	\label{common}The submanifold of $SO\left( S^{3}\right) $ determined by the
	section $b$ of $SO\left( S^{3}\right) \rightarrow S^{3}$ given by $b\left(
	p\right) =\left( dL_{p}\right) _{1}$ is space-like and homologically volume
	maximizing.
\end{theorem}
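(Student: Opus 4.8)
The plan is to realize the submanifold $b(S^3)=\{(dL_p)_1\mid p\in S^3\}\subset SO(S^3)$ as a diagonal-type submanifold in a space that we already understand, namely $S^3\times S^3$ with the split metric \eqref{splitS3xS3}, and to calibrate it there by a single one-point calibration transported by left translations. Recall from the identification $\mathcal I:G_1\to SO(S^3)$ and the morphism $P:S^3\times S^3\to SO_4$, $P(p,q)=L_p\circ R_{\bar q}$, that under $\mathcal I$ the section $b$ corresponds to the subgroup $\{(p,1)\mid p\in S^3\}$ of $S^3\times S^3$ (since $b(p)=(dL_p)_1$, i.e.\ $\mathcal I^{-1}(b(p))$ has Cartan data matching left multiplication by $p$, equivalently the $P$-preimage is $\{(p,1)\}$, up to the two-to-one covering). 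By Lemma \ref{S3xS3so4}, $P$ is a local isometry, so it suffices to show that $S^3\times\{1\}\subset S^3\times S^3$ is space-like and homologically volume maximizing for the metric \eqref{splitS3xS3}; the conclusion then passes down to $SO(S^3)$ exactly as Theorem \ref{main} was deduced from Theorem \ref{mainLIft}, using that $P$ (and $\mathcal I$) are local isometries and the relevant homology/orientation bookkeeping works out because $SO_4$ and odd-dimensional projective space are orientable.

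First I would check that $S^3\times\{1\}$ is space-like: its tangent space at $(p,1)$ is, after left translation to $(1,1)$, exactly $\operatorname{Im}\mathbb H\times\{0\}\subset\operatorname{Im}\mathbb H\times\operatorname{Im}\mathbb H$, on which the form \eqref{splitS3xS3} restricts to $\tfrac12\langle x,x'\rangle$, the ordinary positive-definite inner product on $\operatorname{Im}\mathbb H$. So the submanifold is space-like and, with the canonical orientation of $S^3$, the induced volume form at $(1,1)$ is (a constant multiple of) $e^1\wedge e^2\wedge e^3$ in the notation of the first factor's dual basis. Next I would produce the calibrating $3$-form: take $\psi=\tfrac12\bigl(C\,p_1^\ast\omega^1+\tfrac1C\,p_2^\ast\omega^2\bigr)$ on $S^3\times S^3$, where $\omega^i$ are the bi-invariant volume forms of the two $S^3$-factors and $C>0$ is chosen so that at $(1,1)$ this is precisely the split special Lagrangian calibration of Proposition \ref{warren} for the inner product with square norm $\tfrac12\langle x,y\rangle$ (absorbing the factor $\tfrac12$ into $C$). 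This form is closed (each factor's volume form is closed) and left-invariant; since the metric \eqref{splitS3xS3} is also left-invariant, invariance propagates the pointwise calibration inequality of Proposition \ref{warren} from $(1,1)$ to all of $S^3\times S^3$, so $\psi$ is a calibration.

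Then I would verify that $\psi$ calibrates $S^3\times\{1\}$: along this submanifold the tangent $3$-vector $\xi$ (left-translated to $(1,1)$) lies entirely in the first factor, so $\epsilon^1\wedge\epsilon^2\wedge\epsilon^3(\xi)=0$ while $e^1\wedge e^2\wedge e^3(\xi)\ne0$; by the characterization \eqref{EqWarren}, being special Lagrangian with $C^2\cdot(\text{nonzero})=0$ forces... wait — this is exactly where care is needed, and it is the main obstacle. The clean resolution is that for the one-point calibration we do not need the equality case \eqref{EqWarren} of Warren's proposition verbatim; instead we check directly that $\psi_{(p,1)}(\xi)=\operatorname{vol}_3(\xi)$ by computing both sides: $p_1^\ast\omega^1$ restricted to $S^3\times\{1\}$ is the volume form of $S^3$, $p_2^\ast\omega^2$ restricts to zero, and the induced metric is $\tfrac12$ times the round metric, so choosing $C$ appropriately makes $\tfrac{C}{2}\,p_1^\ast\omega^1$ equal the induced volume form while the $\tfrac1{2C}p_2^\ast\omega^2$ term contributes nothing. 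So the equality of the calibration with the volume holds identically on $b(S^3)$. Finally, invoking Theorem \ref{tfc} gives that $S^3\times\{1\}$ is homologically volume maximizing in $(S^3\times S^3,\langle\,,\rangle)$, hence — pulling down along the local isometries $P$ and $\mathcal I$, and noting that $b(S^3)$ is compact so the ``open subset with compact closure'' in Definition \ref{hvm} can be taken to be all of $b(S^3)$ and competitors are closed submanifolds homologous to it — the submanifold determined by $b$ is space-like and homologically volume maximizing in $SO(S^3)$. The one genuinely delicate point to get right is the covering/homology transfer: one must make sure a competing space-like $V$ in $SO(S^3)$ homologous to $b(S^3)$ lifts (or its relevant cycle lifts) to $S^3\times S^3$ so that Theorem \ref{tfc} applies upstairs; this is handled by the orientability of $SO_4$ together with the fact that $\psi=P^\ast(\text{a closed form }\Psi\text{ on }SO_4)$ descends, so one may equally well run the calibration argument of Theorem \ref{tfc} directly on $SO(S^3)$ with the descended form $\Psi$, avoiding lifting altogether.
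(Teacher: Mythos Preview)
Your overall architecture matches the paper exactly: lift via $\mathcal I\circ P$ to $S^{3}\times S^{3}$ with the metric \eqref{splitS3xS3}, show that $S^{3}\times\{1\}$ is space-like and calibrated there, and then descend. The space-likeness check and the covering/descent discussion are fine. The genuine gap is in the calibration form. You assert that at $(1,1)$ the metric \eqref{splitS3xS3} has square norm $\tfrac{1}{2}\langle x,y\rangle$; it does not. By definition, $\Vert(x,y)\Vert=\tfrac{1}{2}(\vert x\vert^{2}-\vert y\vert^{2})$, i.e.\ the factors $T_{1}S^{3}\times\{0\}$ and $\{0\}\times T_{1}S^{3}$ are already the space-like and time-like summands, not the two null summands. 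Consequently Proposition~\ref{warren} does not apply to your form $\psi=\tfrac{1}{2}\bigl(C\,p_{1}^{\ast}\omega^{1}+\tfrac{1}{C}\,p_{2}^{\ast}\omega^{2}\bigr)$ in these coordinates, and you have not established that $\psi$ is a calibration. Your own ``wait'' moment is exactly the symptom: the equality case \eqref{EqWarren} fails on $T(S^{3}\times\{1\})$ not because of a subtlety, but because Warren's form in null coordinates is a different $3$-form once you change to the diagonal basis. Checking the \emph{equality} $\psi(\xi)=\operatorname{vol}_{3}(\xi)$ on the submanifold, as you then do, does not repair this; what is missing is the \emph{inequality} $\psi(\xi)\geq\operatorname{vol}_{3}(\xi)$ for \emph{all} space-like $3$-vectors, and the $p_{2}^{\ast}\omega^{2}$ term can be negative on such $\xi$.

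The paper sidesteps this by using a simpler form: only the first-factor volume, $p^{\ast}\theta$, where $\theta$ is the bi-invariant $3$-form on $S^{3}$ normalized so that in the orthonormal basis $\{\sqrt{2}(i,0),\sqrt{2}(j,0),\sqrt{2}(k,0),\sqrt{2}(0,i),\sqrt{2}(0,j),\sqrt{2}(0,k)\}$ of \eqref{splitS3xS3} one has $(p^{\ast}\theta)_{(1,1)}=\alpha_{1}\wedge\alpha_{2}\wedge\alpha_{3}$. This is the split \emph{one-point} calibration of Mealy (the remark after his Fundamental Lemma): for any space-like $3$-plane, written as the graph of $A:V_{+}\to V_{-}$ with singular values $<1$, one has $\alpha_{1}\wedge\alpha_{2}\wedge\alpha_{3}(\xi)=1$ while $\operatorname{vol}_{3}(\xi)=\sqrt{\det(I-A^{T}A)}\leq 1$, with equality iff $A=0$. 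So $p^{\ast}\theta$ is closed, is a calibration, and calibrates precisely $S^{3}\times\{1\}$. Your argument becomes correct if you simply drop the $\tfrac{1}{C}\,p_{2}^{\ast}\omega^{2}$ term and normalize; the rest of your outline (left-invariance to propagate, Theorem~\ref{tfc}, descent through the local isometry) then goes through exactly as in the paper.
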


\begin{remark} \rm 
	Compare with the calibration on the double covering of $SO\left( S^{3}\right) 
	$ in the Riemannian setting in \cite{JDG}.
\end{remark}

\begin{proof}
	By Lemma \ref{S3xS3so4}, $\mathcal{I}\circ P:S^{3}\times S^{3}\rightarrow
	SO\left( S^{3}\right) $ is a double covering and a local isometry with
	respect to the metrics (\ref{splitS3xS3}) and the one induced on $SO\left(
	S^{3}\right) $ from (\ref{33}) via $\mathcal{I}$ (with $\kappa=0$). Since $\mathcal{I}\circ P$
	sends $S^{3}\times \left\{ 1\right\} $ to the image of the section $b$, it
	suffices to show that $S^{3}\times \left\{ 1\right\} $ is homologically volume
	maximizing. Let $\theta $ be the bi-invariant $3$-form on $S^{3}$ such that $%
	\theta _{1}\left( i,j,k\right) =2^{-3/2}$ and let $p:S^{3}\times
	S^{3}\rightarrow S^{3}$ be the projection onto the first factor.
	
	Now, $\left\{ \sqrt{2}\left( i,0\right) ,\sqrt{2}\left( j,0\right) ,\sqrt{2}%
	\left( k,0\right) ,\sqrt{2}\left( 0,i\right) ,\sqrt{2}\left( 0,j\right) ,%
	\sqrt{2}\left( 0,k\right) \right\} $ is a positively oriented orthonormal
	basis of $ T_{\left( 1,1\right)
	}\left( S^{3}\times S^{3}\right) $ for the split metric (\ref{splitS3xS3}).
	Let $\left\{ \alpha _{1},\dots ,\alpha _{6}\right\} $ be its dual basis. It
	is easy to see that $p^{\ast }\theta $ is a left invariant $3$-form on $%
	S^{3}\times S^{3}$ such that $\left( p^{\ast }\theta \right) _{\left(
		1,0\right) }=\alpha _{1}\wedge \alpha _{2}\wedge \alpha _{3}$.
	
	Since $p^{\ast }\theta $ is closed, it is a split one-point calibration by
	the remark after the Fundamental Lemma of Calibrations in \cite{Mealy}. The
	computations above show that it calibrates $S^{3}\times \left\{ 1\right\} $,
	as desired.
\end{proof}

\begin{proposition}
	For $\kappa =0,-1$, no local section of $SO\left( M_{\kappa }\right) $ has
	optimal vorticity.
\end{proposition}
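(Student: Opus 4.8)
The plan is to show that optimal vorticity forces the ambient metric to have constant sectional curvature $+1$ on the domain of the section, which is impossible for $M_0=\mathbb{R}^3$ and $M_{-1}=H^3$. So suppose $b:U\to SO(M_\kappa)$ is a local section with optimal vorticity, $U$ a nonempty open subset of $M_\kappa$. Fix an orthonormal basis $\{e_1,e_2,e_3\}$ of $T_oM_\kappa$ and set $E_i(p)=b(p)(e_i)$ for $p\in U$, so that $\{E_1,E_2,E_3\}$ is a local orthonormal frame on $U$ (pointwise a basis, since each $b(p)$ is bijective). By Proposition \ref{prompt}, $\nabla_Xb=C_{X^b}\circ b$, and the definition of optimal vorticity gives $X^b=X$; evaluating on $e_i$ we get
\begin{equation*}
\nabla_XE_i=X\times E_i
\end{equation*}
for every vector field $X$ on $U$ and every $i\in\{1,2,3\}$.

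Next I would compute the curvature of $M_\kappa$ on $U$ from this identity. The cross product of $M_\kappa$ is built from the metric and the volume form, both of which are parallel, hence it is $\nabla$-parallel: $\nabla_X(Y\times E_i)=(\nabla_XY)\times E_i+Y\times(X\times E_i)$, and likewise with $X$ and $Y$ interchanged. Substituting into $R(X,Y)E_i=\nabla_X\nabla_YE_i-\nabla_Y\nabla_XE_i-\nabla_{[X,Y]}E_i$, the terms carrying covariant derivatives of $X$ and $Y$ assemble into $\bigl(\nabla_XY-\nabla_YX-[X,Y]\bigr)\times E_i=0$ by torsion-freeness, and there remains $R(X,Y)E_i=Y\times(X\times E_i)-X\times(Y\times E_i)$. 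Applying the triple product identity $a\times(b\times c)=\langle a,c\rangle\,b-\langle a,b\rangle\,c$ to both terms, the contributions proportional to $E_i$ cancel and one gets $R(X,Y)E_i=\langle Y,E_i\rangle X-\langle X,E_i\rangle Y$. Since the $E_i(p)$ span $T_pM_\kappa$ and $R(X,Y)$ is tensorial in the last slot, this yields $R(X,Y)Z=\langle Y,Z\rangle X-\langle X,Z\rangle Y$ on all of $U$, i.e.\ $M_\kappa$ has constant sectional curvature $1$ there. This contradicts the flatness of $M_0$ and the constant curvature $-1$ of $M_{-1}$, so no such $b$ can exist.

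The computation is elementary; the only point requiring attention is the bookkeeping of sign conventions (for the curvature tensor and for the vector triple product) together with the observation that the cross product is $\nabla$-parallel, so I do not expect any genuine obstacle. I note in passing that for $\kappa=1$ the identity $R(X,Y)Z=\langle Y,Z\rangle X-\langle X,Z\rangle Y$ holds automatically, consistent with the optimal global section of $SO(S^3)$ produced above; one could alternatively extract the same conclusion from Lemma \ref{LemaZ} and the bracket formula (\ref{corZZ}) as an integrability condition, but the moving-frame curvature computation above seems the most transparent.
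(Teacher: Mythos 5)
Your proof is correct, and it takes a genuinely different route from the paper's. You work entirely on the base manifold: optimal vorticity is translated into the first-order condition $\nabla_X E_i = X\times E_i$ for the moving frame $E_i = b(\cdot)(e_i)$, and the Ricci identity together with the parallelism of the cross product and the triple-product formula forces $R(X,Y)Z=\langle Y,Z\rangle X-\langle X,Z\rangle Y$, i.e.\ constant sectional curvature $+1$, which is impossible on $M_0$ and $M_{-1}$ (and your sanity check against the $S^3$ section pins down the sign convention). The paper instead works upstairs on $G_{\kappa}\cong SO(M_{\kappa})$: via Lemma \ref{LemaZ} an optimal section would be an integral submanifold of the left-invariant distribution spanned by $Z_{\kappa}(x,x)$, and the bracket formula (\ref{corZZ}) gives $[Z_{\kappa}(x,x),Z_{\kappa}(y,y)]=Z_{\kappa}(2x\times y,(\kappa+1)x\times y)\notin\mathcal{D}$ for $\kappa\neq 1$, so Frobenius yields the contradiction. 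The two arguments are two faces of the same integrability obstruction (the defect $(\kappa+1)-2=\kappa-1$ in the bracket is exactly the curvature discrepancy you compute), but yours buys more: it needs no homogeneity or Lie-group structure and shows that an optimal-vorticity section exists on an open set of an arbitrary oriented Riemannian three-manifold only where the sectional curvature is identically $1$, whereas the paper's argument is confined to space forms but sits naturally inside the machinery (the identification $\mathcal{I}$, Lemma \ref{LemaZ}) already built for Proposition \ref{helicitySL}. Either write-up is acceptable; if you flesh yours out, just state the curvature convention explicitly so the sign of the conclusion is unambiguous.
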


\begin{proof}
	Suppose that $b:U\rightarrow SO\left( M_{\kappa }\right) $ is a local
	section of $SO\left( M_{\kappa }\right) \rightarrow M_{\kappa }$ with
	optimal vorticity. Let $\mathcal{D}$ be the left invariant distribution on $%
	G_{\kappa }$ defined at the identity by 
	\begin{equation*}
		\mathcal{D}_{I_{6}}=\left\{ Z_{\kappa }\left( x,x\right) \mid x\in \mathbb{R}%
		^{3}\right\} 
	\end{equation*}%
	and let $\mathcal{E}$ be the corresponding distribution on $SO\left(
	M_{\kappa }\right) $ via the identification $\mathcal{I}$ in (\ref{identif}%
	), that is, $d\mathcal{I}_{g}\left( \mathcal{D}_{g}\right) =\mathcal{E}_{%
		\mathcal{I}\left( g\right) }$ for $g\in G_{\kappa }$.
	
	Let us see first that $b\left( U\right) $ is an integral submanifold of the
	distribution $\mathcal{E}$. Let $p\in U$ and $y\in T_{p}U$. Let $g$, $\bar{b}
	$ and $x$ as in Lemma \ref{LemaZ}. By parts (b) and (a) of that lemma, 
	\begin{equation*}
		\left( db\right) _{p}\left( y\right) =\left( d\mathcal{L}_{g}\right) _{\iota
		}\left( d\bar{b}_{o}\left( x\right) \right) =\left( d\mathcal{L}_{g}\right)
		_{\iota }d\mathcal{I}_{I_{6}}\left( Z_{\kappa }\left( x,x^{b}\right) \right) 
		\text{,}
	\end{equation*}%
	which belongs to $\left( d\mathcal{L}_{g}\right) _{\iota }\mathcal{E}\left( 
	\bar{b}\left( o\right) \right) =\mathcal{E}\left( b\left( p\right) \right) $, 
	since $x^{b}=x$ by hypothesis. Hence $T_{b\left( p\right) }b\left(
	U\right)\allowbreak  =\mathcal{E}_{b\left( p\right) }$. Then $b\left( U\right) $ is an
	integral submanifold of the distribution $\mathcal{E}$ (both of the same
	dimension). We have reached a contradiction, since the distribution $%
	\mathcal{D}$ is nowhere involutive for $\kappa =0,-1$. In fact, it is left
	invariant and by (\ref{corZZ}), at the identity one has 
	\begin{equation*}
		\left[ Z_{\kappa }\left( x,x\right) ,Z_{\kappa }\left( y,y\right) \right]
		=Z_{\kappa }\left( 2x\times y,\left( \kappa +1\right) x\times y\right) \text{%
			,}
	\end{equation*}%
	which does not belong to $\mathcal{D}_{I_{6}}$ if $\kappa \neq 1$.
\end{proof}

\bigskip

\noindent {\sc famaf} (Universidad Nacional de C\'ordoba) and {\sc ciem} (Conicet) 

\smallskip

\noindent Ciudad Universitaria, {\sc x5000hua}  C\'{o}rdoba, Argentina

\smallskip

\noindent marcos.salvai@unc.edu.ar \ ({\sc orcid} 0000-0001-9900-7752)


\end{document}